\documentclass[12pt]{amsart}
\usepackage{amsmath,amsthm,amssymb,amsfonts,dsfont}
\usepackage{graphicx}
\usepackage{color}
\usepackage[utf8]{inputenc}

\newtheorem{theorem}{Theorem}[section]
\newtheorem{lemma}[theorem]{Lemma}

\newtheorem{proposition}[theorem]{Proposition}

\newtheorem{corollary}[theorem]{Corollary}
\theoremstyle{definition}
\newtheorem{definition}[theorem]{Definition}

\newtheorem{example}[theorem]{Example}

\newtheorem{remark}[theorem]{Remark}
\numberwithin{equation}{section}
% Absolute value notation

% Blank box placeholder for figures (to avoid requiring any
% particular graphics capabilities for printing this document).

%COMMANDS
\def\co{\text{\rm co\,}}

\newcommand{\epsi}{\varepsilon}
\newcommand{\eee}{\varepsilon}
\def\spa{\text{\rm span\,}}
\def\llll{\longrightarrow}

\newcommand{\N}{{\mathbb {N}}}

\newcommand{\R}{{\mathbb {R}}}

\def\com#1{{``#1''}}

\def\sep{{ \ \  }}
\def\sem{{\ \ \ \  }}
\def\seg{{\ \ \ \  \ \  }}

\hoffset=-25mm
\voffset=-15mm
\setlength{\textwidth}{180mm}
\setlength{\textheight}{220mm}

\title[Bishop-Phelps-Bollob{\'a}s  property] {Bishop-Phelps-Bollob{\'a}s  property  for positive functionals }

\author[M.D. Acosta]{Mar\'{\i}a D. Acosta}
\address{Universidad de Granada, Facultad de Ciencias,
	Departamento de An\'{a}lisis Matem\'{a}tico, 18071 Granada, Spain}
\email{dacosta@ugr.es}
\author[M. Soleimani]{Maryam Soleimani-Mourchehkhorti}
\address{School of Mathematics, Institute for Research in Fundamental Sciences (IPM), P.O. Box: 19395-5746, Tehran, Iran}
\email{m-soleimani85@ipm.ir}

\thanks{The  first  author was  supported  by Junta de Andaluc\'{\i}a grant  FQM--185,  by Spanish MINECO/FEDER grant PGC2018-093794-B-I00 	
and also by     Junta de Andaluc\'{\i}a grant  A-FQM-484-UGR18. The second author was   supported by a grant from IPM}

\keywords{Banach space,  functional, Bishop-Phelps-Bollob{\'a}s  theorem, Bishop-Phelps-Bollob{\'a}s  \linebreak[4] pro\-per\-ty for positive functionals.}

\begin{document}

\subjclass[2010]{Primary 46B20; Secondary 46B42.}

\begin{abstract}
	We introduce the so-called  Bishop-Phelps-Bollob{\'a}s property  for positive functionals, a particular case of the  Bishop-Phelps-Bollob{\'a}s property  for positive operators. First we show a  version of the Bishop-Phelps-Bollob{\'a}s theorem  for positive elements and positive  functionals in the dual of any Banach lattice. We also characterize the strong  Bishop-Phelps-Bollob{\'a}s property  for positive functionals in a Banach lattice.  We prove that any finite-dimensional Banach lattice has the the Bishop-Phelps-Bollob{\'a}s property  for positive functionals.  A sufficient and a necessary condition to have the Bishop-Phelps-Bollob{\'a}s property  for positive functionals are  also provided. As a consequence of this result, we obtain that the spaces $L_p(\mu)$  ($1\le p < \infty$), for any positive measure $\mu,$  $C(K)$ and $\mathcal{M} (K)$,  for any compact and Hausdorff topological space $K,$  satisfy  the Bishop-Phelps-Bollob{\'a}s property  for positive functionals. We also provide some more clarifying examples.
\end{abstract}

\maketitle

\section{Introduction}

Bishop and Phelps proved that the set of norm attaining functionals on a Banach space is norm dense in the topological dual  \cite{BP}.  Bollob\'{a}s  proved a \com{quantitative version} of the
Bishop-Phelps theorem \cite{Bol}. In order to state that result we  recall some notation. 
Given   a  Banach space $X$, we denote the unit sphere of $X$ by $S_X$ and the closed unit ball by
$B_X.$ The topological dual of $X$  is denoted by   $X^\ast.$   The following result can be found in  \cite[ Theorem 16.1]{BoDu}  and \cite[Corollary 2.4]{CKMMR} and it is  known as
the Bishop-Phelps-Bollob\'{a}s theorem: 

{\it Let $\varepsilon > 0$ be arbitrary. If $x \in B_X$ and $x^\ast \in S_{X^\ast}$ are such that
	$ \vert 1 - x^\ast(x ) \vert <  \dfrac{\varepsilon^2}{2},$  then there are elements $y\in S_X $ and
	$ y^\ast \in S_{X^\ast}$ such that $y^\ast (y)=1$,  $ \Vert y-x \Vert < \varepsilon$  and $
	\Vert y^\ast -x^\ast \Vert < \varepsilon $.}

In 2008  Acosta, Aron, García and Maestre  \cite{AAGM}	
	introduced the so-called Bishop-Phelps-Bollob{\'a}s property for operators (see \cite[Definition 1.1]{AAGM}) and provided some  pairs of Banach spaces with that property. Afterwards a number of interesting results on that topic appeared. The survey paper \cite{Acbc} contains many of such results.  Very recently the authors introduced the 
Bishop-Phelps-Bollob{\'a}s property for  positive operators between Banach lattices \cite{ASpos}.   In order to recall such property we need  some notions.
The concepts in the first definition  are standard and can be found, for instance, in \cite{AbAl}.

\begin{definition} 
	\label{def-lattice}
	An \textit{ordered vector space} is a real vector space $X$  equipped with a vector space order, that is,  an order relation $\le$  on $X$ that  is compatible with the algebraic structure of $X$.   An ordered vector space is called a  \textit{Riesz space}  if every pair of vectors has a least upper bound  and a greatest lower bound.   In a Riesz space $X,$   given two elements $x$ and $y$ in $X,$    we denote by $x \wedge y, $  $x \vee y, $  $\vert x\vert ,$ $x^+ ,$ and $x^-$  the infimum of $x$ and $y,$ the supremum of $x$ and $y,$  the supremum of $x$ and  $-x,$ , the supremum of $x$ and $0$, and the supremum of $-x$ and $0,$ respectively. 
	 A norm $\Vert \ \Vert $ on a Riesz space   $X$ is said  to be a   \textit{lattice norm} whenever $|x| \leq |y|$ implies $\Vert x\Vert  \le \Vert y \Vert  $. \textit{A normed  Riesz space} is a  Riesz space equipped  with a lattice norm. A normed Riesz space whose norm is complete  is called a \textit{Banach lattice}.
	 
	 In case that  $(\Omega,   \mu)$ is a 
	 measure space,  we denote by $L^0(\mu)$  the space of (equivalence classes of
	 $\mu$-a.e. equal) real valued measurable functions on $\Omega.$ We say that a Banach space $X$ is a \textit{Banach function space}
	 on $(\Omega,  \mu)$ if  $X$ is an ideal in $L^0 (\mu)$ and whenever $x,y\in
	 X$ and $|x| \le |y|$ a.e., then $\Vert x \Vert \le \Vert y \Vert .$

\end{definition}

For any Banach lattice $X,$ we will denote by $X^+$ the set of \textit{positive elements} in $X,$ that is, $X^+= \{ x \in X: 0 \le x\}.$
Recall that the  dual  of any normed Riesz space is a Banach lattice   (see \cite[Theorem 4.1]{AlBur}).  We will use  that in a Riesz space $X ,$  any   element   $x \in X$ satisfies  
$x= x^{+} - x^- ,$ $\vert x \vert = x^{+} +  x^-  $ and  $x^+ \wedge x^- = 0 $   (see  \cite[Theorem 1.5]{AlBur}).

If $X$ and $Y$ are Banach spaces, we denote by $L(X,Y)$ the space of all  bounded and linear operators from $X$ to $Y.$
A linear mapping  $T: X \llll Y$ between two ordered vector spaces is called \textit{positive} if $x \geq 0$ implies $T(x) \geq 0$.

Now we recall the notion of  Bishop-Phelps-Bollob{\'a}s property for  positive operators and introduce two new properties for functionals  that will be useful in this paper.

\begin{definition}[{\cite[Definition 1.3]{ASpos}}]
	\label{def-BPBp-pos}
	Let $X$  and $Y$ be    Banach lattices and $M $ a subspace of $L(X,Y).$  The subspace $M$  is said to have the {\it Bishop-Phelps-Bollob{\'a}s property for  positive operators}   if for every $  0 < \varepsilon  < 1 $  there exists $ 0< \eta (\varepsilon) < \varepsilon $ such that for every $S\in S_M$, such that $S \ge 0$,  if $x_0 \in S_X$ satisfies
	%	\linebreak[4]
	$ \Vert S (x_0) \Vert > 1 - \eta (\varepsilon)$, then
	there exist an element $u_0 \in S_X$  and a positive  operator $T \in S_M$ satisfying the following conditions
	$$
	\Vert T (u_0) \Vert =1, \sem \Vert u_0- x_0 \Vert < \varepsilon \seg \text{and}
	\sem \Vert T-S \Vert < \varepsilon.
	$$
\end{definition}

Acosta and Soleimani-Mourchehkhorti proved  that  the pair $(L_\infty (\mu), L_1 (\nu))$ has the  Bishop-Phelps-Bollob{\'a}s property for positive operators,  for any positive measures $\mu $ and $\nu$ (see \cite[Theorem 1.6]{ASpos}). The same property holds for the pair $(c_0, L_1 (\mu))$ for any positive  measure $\mu$ (see \cite[Theorem 1.7]{ASpos}).  	These results were extended in \cite{ASpos2} to  the pair $(c_0, Y)$ in case that $Y$ is a uniformly monotone Banach lattice and $(L_\infty (\mu), Y),$ whenever $Y$ is a uniformly monotone Banach lattice with a weak unit.

\begin{definition}
	\label{def-BPBpp}
	We say that a Banach lattice $X$   has the  {\it Bishop-Phelps-Bollob{\'a}s property for  positive functionals}  (we will  write  BPBp instead of Bishop-Phelps-Bollob{\'a}s property)  if for every $  0 < \varepsilon  < 1 $  there exists $ 0< \eta (\varepsilon) < \varepsilon $ such that for every $x^* \in S_{X^*}$, such that $x^* \ge 0$,  if $x_0 \in S_X$ satisfies
	%	\linebreak[4]
	$ x^* (x_0)  > 1 - \eta (\varepsilon)$, then
	there exist an element $y \in S_X$  and a positive  functional  $y^*\in S_{X^*}$ satisfying the following conditions
	$$
	y^*(y) =1, \sem \Vert y- x_0 \Vert < \varepsilon \seg \text{and}
	\sem \Vert y^*-x^* \Vert < \varepsilon.
	$$
	A Banach lattice $X$   has the  {\it strong Bishop-Phelps-Bollob{\'a}s property for  positive functionals}   if  it has the Bishop-Phelps-Bollob{\'a}s property for  positive functionals and additionally the element $y$ appearing in that definition is positive.
\end{definition}

Trivially the strong Bishop-Phelps-Bollob{\'a}s property for positive functionals implies the   Bishop-Phelps-Bollob{\'a}s property for positive functionals.

In Section 2 we obtain a general version of Bishop-Phelps-Bollob{\'a}s property for Banach lattices where the elements in  the Banach space and the functionals are  positive (see Proposition \ref{pro-BPBp-pp-general}). We also prove an intrinsic characterization of the strong BPBp for positive functionals (see  Theorem \ref{teo-UMOE-BPBpp}). In the case of the Bishop-Phelps-Bollob{\'a}s property for positive  functionals, we show that finite-dimensional Banach lattices always have such property (see Corollary  \ref{pro-finite-BPBpp}).  We also  provide a sufficient  and a necessary condition  for the BPBp for positive functionals
in Theorem \ref{teo-SM-HNA-BPBpp}.  

In Section 3  we  prove that the spaces $L_p (\mu),$ $C(K)$  and $\mathcal{M} (K)$  satisfy  the BPBp for positive functionals, for any positive measure $\mu$ and any compact and Hausdorff topological space $K$ (see Corollary \ref{co-Lp-C(K)-BPBpp}). We also  show that $\R^2,$ endowed with an absolute norm,  satisfies the assumptions  in Theorem \ref{teo-SM-HNA-BPBpp}. However, the case of the three dimensional Banach lattice is different (see Examples \ref{exam-dim-3-N-HNAp} and \ref{exam-dim-3-N-SM}). We also provide an example showing that not every Banach lattice has the Bishop-Phelps--Bollob{\'a}s property for positive functionals (see Example \ref{ell2-not-WM}).

We point out that throughout this paper we consider only real Banach spaces.

\vskip4mm

\section{The   results}

The goal of this section is to provide classes of Banach lattices 
with the strong Bishop-Phelps-Bollob{\'a}s property for positive functionals and the  Bishop-Phelps-Bollob{\'a}s property for positive functionals.

\begin{definition}
	\label{def-HNAp}
	A  real Banach lattice $X$ has \textit{the hereditary norm attaining property} (HNAp in short) if  it satisfies the following condition
	$$
	x^* \in X^*, x \in X, x^* (x) = \Vert x^* \Vert  \Vert x \Vert  \ \Rightarrow  \ 
	x^{* ^+} (x^+) = \Vert x^{* ^+} \Vert  \Vert x^+  \Vert ,~~~~ x^{*^-} (x^-) = \Vert x^{*^-} \Vert  \Vert x^- \Vert .
	$$
\end{definition} 

In the proof of the main results  the following simple facts will be useful.

\begin{proposition}
	\label{rem-state-dual-positive}
	Let $X$ be a Banach lattice.
\begin{enumerate}
	\item[i)] If $x \in S_X,$  $x^* \in S_{X^*}$ and $x^*(x)=1$ then $\vert x^* \vert ( \vert x \vert )=1,$ therefore
	$$
	x^{*+}(x^-)=0=  	x^{*-}(x^+)
	$$
		\item[ii)]   Assume that it is satisfied 
		$$
		x \in S_X, x^* \in S_{X^*}, x^* (x)=1 \ \Rightarrow \ \Vert x^{*+} \Vert \, \Vert x^+ \Vert  +  \Vert x^{*-} \Vert \, \Vert x^- \Vert \le 1.
		$$
		Then $X$ has the hereditary norm attaining property.
	\end{enumerate}	 
\end{proposition}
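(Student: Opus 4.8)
The plan is to derive both statements from the Riesz identities $x = x^{+} - x^{-}$, $\vert x \vert = x^{+} + x^{-}$ recalled above (applied also to $x^{*}$ in the dual Banach lattice), together with two standard facts: $\Vert \vert z \vert \Vert = \Vert z \Vert$ in any normed Riesz space, and a positive functional takes non-negative values on positive elements. No approximation or $\varepsilon$-machinery is needed; everything is a chain of elementary (in)equalities.

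\emph{Part i).} First I would bound $\vert x^{*} \vert (\vert x \vert)$ from below. Since $\vert x^{*} \vert - x^{*} = 2 x^{*-} \ge 0$ and $x^{+} \ge 0$, one gets $\vert x^{*} \vert (x^{+}) \ge x^{*}(x^{+})$; likewise $\vert x^{*} \vert + x^{*} = 2 x^{*+} \ge 0$ and $x^{-} \ge 0$ give $\vert x^{*} \vert (x^{-}) \ge - x^{*}(x^{-})$. Adding these and using $\vert x \vert = x^{+} + x^{-}$,
$$
\vert x^{*} \vert (\vert x \vert) \ge x^{*}(x^{+}) - x^{*}(x^{-}) = x^{*}(x^{+} - x^{-}) = x^{*}(x) = 1 .
$$
On the other hand $\vert x^{*} \vert (\vert x \vert) \le \Vert \vert x^{*} \vert \Vert \, \Vert \vert x \vert \Vert = \Vert x^{*} \Vert \, \Vert x \Vert = 1$, so $\vert x^{*} \vert (\vert x \vert) = 1$. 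For the displayed consequence I would then expand both $1 = \vert x^{*} \vert (\vert x \vert)$ and $1 = x^{*}(x)$ via $\vert x^{*} \vert = x^{*+} + x^{*-}$, $x^{*} = x^{*+} - x^{*-}$, $\vert x \vert = x^{+} + x^{-}$, $x = x^{+} - x^{-}$; subtracting the two expansions yields $x^{*+}(x^{-}) + x^{*-}(x^{+}) = 0$, and since each summand is a positive functional evaluated at a positive element, both vanish.

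\emph{Part ii).} Here I may assume $x \neq 0$ and $x^{*} \neq 0$ and, after rescaling, that $\Vert x \Vert = \Vert x^{*} \Vert = 1$, so the hypothesis $x^{*}(x) = \Vert x^{*} \Vert \, \Vert x \Vert$ becomes $x^{*}(x) = 1$ (the conclusion of HNAp is invariant under such rescaling). Applying part i), $x^{*+}(x^{-}) = x^{*-}(x^{+}) = 0$, hence
$$
1 = x^{*}(x) = (x^{*+} - x^{*-})(x^{+} - x^{-}) = x^{*+}(x^{+}) + x^{*-}(x^{-}) .
$$
Combining $x^{*+}(x^{+}) \le \Vert x^{*+} \Vert \, \Vert x^{+} \Vert$ and $x^{*-}(x^{-}) \le \Vert x^{*-} \Vert \, \Vert x^{-} \Vert$ with the standing hypothesis $\Vert x^{*+} \Vert \, \Vert x^{+} \Vert + \Vert x^{*-} \Vert \, \Vert x^{-} \Vert \le 1$ gives
$$
1 = x^{*+}(x^{+}) + x^{*-}(x^{-}) \le \Vert x^{*+} \Vert \, \Vert x^{+} \Vert + \Vert x^{*-} \Vert \, \Vert x^{-} \Vert \le 1 .
$$
So equality holds throughout; since the two comparisons point the same way, equality in their sum forces equality in each, which is exactly the hereditary norm attaining property.

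I do not expect a genuine obstacle: the argument is a sequence of sandwich inequalities. The only point requiring care is the bookkeeping with $x^{*\pm}$ and $x^{\pm}$, in particular the systematic use of $\Vert \vert \cdot \vert \Vert = \Vert \cdot \Vert$ on both $X$ and $X^{*}$ and the elementary identities $\vert x^{*} \vert \pm x^{*} = 2 x^{*\pm}$ that make the lower bound in part i) work.
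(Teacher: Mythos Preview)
Your argument is correct and follows essentially the same route as the paper: both parts are obtained by sandwiching $1 = x^{*}(x)$ between expansions in terms of $x^{\pm}$ and $x^{*\pm}$ and the upper bound $\Vert \vert x^{*} \vert \Vert \, \Vert \vert x \vert \Vert = 1$ (resp.\ the hypothesis $\Vert x^{*+}\Vert\,\Vert x^{+}\Vert + \Vert x^{*-}\Vert\,\Vert x^{-}\Vert \le 1$), then reading off the vanishing of the cross terms and the norm-attaining equalities. The only cosmetic difference is that for the lower bound in i) you use the identities $\vert x^{*}\vert \pm x^{*} = 2x^{*\pm}$, while the paper simply expands $(x^{*+}-x^{*-})(x^{+}-x^{-})$ directly; the resulting chain of inequalities is the same.
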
 
\begin{proof}
For the proof of i) let us notice that
	\begin{align}
	1  &  =  x^*(x)   
	\nonumber \\
	&  =   \bigl( x^{*+} -  x^{*-} \bigr)    \bigl( x^{+} -  x^{-} \bigr)
	\nonumber  \\
	&  \le     x^{*+} (x^+) +   x^{*-} (x^-) 
	\nonumber   \\
	& \le     \bigl(  x^{*+}  +   x^{*-} \bigr )  \bigl( x^+ +   x^{*-} \bigr) 
	\nonumber  \\
	& =   \vert x^* \vert      \bigl( \vert x \vert \bigr)  
	\nonumber  \\
	& \le 1.
	\nonumber
	\end{align}
	Hence  $\vert x^* \vert ( \vert x \vert )=1$ and 
	$$
	x^{*+}(x^-)=0=  	x^{*-}(x^+).
	$$
	Now we prove ii). Let us notice that  it suffices to check that the condition in Definition \ref{def-HNAp} is satisfied for normalized elements. Let $x \in S_X $ and $x^*\in S_{X^*}$  be elements such that $x^* (x)=1.$  Then by using the assumption we have that
	\begin{align}
	1  &  =  x^*(x)   
	\nonumber \\
	&  =   \bigl( x^{*+} -  x^{*-} \bigr)    \bigl( x^{+} -  x^{-} \bigr)
	\nonumber  \\
	&  \le     x^{*+} (x^+) +   x^{*-} (x^-) 
	\nonumber   \\
	& \le     \Vert   x^{*+}  \Vert  \  \Vert  x^+ \Vert    +  \Vert   x^{*-}  \Vert  \  \Vert  x^- \Vert 
	\nonumber  \\
	& \le 1.
	\nonumber
	\end{align}
	Hence  $    x^{*+} (x^+) =     \Vert   x^{*+}  \Vert  \  \Vert  x^+ \Vert $ and $ x^{*-} (x^-) =     \Vert   x^{*-}  \Vert  \  \Vert  x^- \Vert .$  So $X$ has the hereditary norm attaining property.
\end{proof}

From Bishop-Phelps-Bollob{\'a}s Theorem we will obtain the following general result. 

\begin{proposition}
	\label{pro-BPBp-pp-general}
	Let $X$ be a Banach lattice and $0  < \epsi < 1.$ If $x \in S_X$ is positive,  $x^* \in S_ {X^*}$ is positive and $x^* (x) >  1 - \frac{\epsi ^2}{2} $ there are  elements $y\in X^+ \cap S_X$ and $y^* \in X^{*+} \cap S_{X^*}$ satisfying the following conditions
	$$
	y^* (y)=1, \sem \Vert y - x \Vert <\epsi \sem \text{\rm and} \sem \Vert y^* - x^* \Vert < \epsi.
	$$
	\end{proposition}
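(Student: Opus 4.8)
The plan is to apply the Bishop-Phelps-Bollob\'as theorem recalled in the introduction to the pair $(x,x^*)$ and then to replace the (possibly non-positive) elements it produces by their absolute values, controlling everything with Proposition \ref{rem-state-dual-positive}.

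First I would note that, since $x \in S_X \subseteq B_X$ and $x^* \in S_{X^*}$, we have $x^*(x) \le \Vert x^* \Vert \, \Vert x \Vert = 1$, so the hypothesis $x^*(x) > 1 - \frac{\epsi^2}{2}$ says precisely that $\vert 1 - x^*(x) \vert < \frac{\epsi^2}{2}$. The Bishop-Phelps-Bollob\'as theorem then furnishes $z \in S_X$ and $z^* \in S_{X^*}$ with $z^*(z) = 1$, $\Vert z - x \Vert < \epsi$ and $\Vert z^* - x^* \Vert < \epsi$. I would then set $y := \vert z \vert$ and $y^* := \vert z^* \vert$, which are positive by construction. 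Since a lattice norm satisfies $\Vert \vert u \vert \Vert = \Vert u \Vert$ (both in $X$ and in the Banach lattice $X^*$), we get $y \in X^+ \cap S_X$ and $y^* \in X^{*+} \cap S_{X^*}$; and because $z \in S_X$, $z^* \in S_{X^*}$ and $z^*(z) = 1$, Proposition \ref{rem-state-dual-positive} i) gives $y^*(y) = \vert z^* \vert \bigl( \vert z \vert \bigr) = 1$.

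To finish I would estimate the two distances. As $x$ is positive, $\vert x \vert = x$, so using the elementary Riesz-space inequality $\bigl\vert \vert z \vert - \vert x \vert \bigr\vert \le \vert z - x \vert$ and the lattice norm property, $\Vert y - x \Vert = \bigl\Vert \vert z \vert - \vert x \vert \bigr\Vert \le \Vert z - x \Vert < \epsi$; the same computation in $X^*$, with $\vert x^* \vert = x^*$, yields $\Vert y^* - x^* \Vert \le \Vert z^* - x^* \Vert < \epsi$. I do not expect a genuine technical obstacle here: the one thing that must be done correctly is to pass to \emph{absolute values} rather than to positive parts, since $z^+$ and $(z^*)^+$ would keep the distances small but would in general ruin both the normalization ($\Vert z^+ \Vert$ can be strictly less than $1$) and the equality $y^*(y) = 1$, whereas $\vert \cdot \vert$ preserves norms exactly and, through Proposition \ref{rem-state-dual-positive} i), supplies $y^*(y) = 1$ for free.
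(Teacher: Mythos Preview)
Your proof is correct and follows essentially the same route as the paper's: apply the Bishop--Phelps--Bollob\'as theorem, then replace the resulting pair by their absolute values, using Proposition~\ref{rem-state-dual-positive}~i) to recover $y^*(y)=1$ and the inequality $\bigl\vert\,\vert z\vert-\vert x\vert\,\bigr\vert\le\vert z-x\vert$ (together with $x=\vert x\vert$, $x^*=\vert x^*\vert$) to control the distances. Your additional remark on why positive parts would not work is a nice touch but not part of the paper's argument.
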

\begin{proof}
By applying  the Bishop-Phelps-Bollob{\'a}s Theorem there are elements $z \in S_X$ and $z^* \in S_{X^*}$ such that
\begin{equation}
\label{dem-pro-BPBp-pos-pos}
z^*(z)=1, \sem \Vert z - x \Vert < \epsi \sem \text{and} \sem 
\Vert z^* - x^* \Vert < \epsi .
\end{equation}
Hence  $\vert z \vert  \in S_{X}$ and  $\vert z^* \vert  \in S_{X^*}.$ In view of Proposition 	\ref{rem-state-dual-positive}  we have that $\vert z^* \vert (\vert z \vert )=1. $
From \eqref{dem-pro-BPBp-pos-pos}, since $x$ is positive and $x^*$ is also positive we obtain that
\begin{equation}
\label{dem-pro-BPBp-pos-dist-absol}
 \Vert\vert  z  \vert - x \Vert < \epsi \sem  \text{and} \sem 
\Vert \vert z^* \vert  - x^* \Vert < \epsi .
\end{equation}
Hence the elements $y= \vert z \vert $ and $y^*= \vert z^* \vert$ satisfy the stated assertion.
\end{proof}

Now we  collect  a few known facts in the following statement.  They can be found in Theorems 1.5 and 1.7 and Exercise 2 in \cite{AlBur}.

\begin{lemma}
	\label{le-facts-lattices}
	Let $X$ be a  Banach lattice. The following statements hold.
	\begin{enumerate}
		\item[a)] If  $y, z \in X^+, y \wedge z=0, x= y-z \ \Rightarrow \ y=x^+, z = x^-.$
		%Theorem 1.5,  p. 4, Aliprantis-Burkinshaw
		\item[b)] $ \vert x-y \vert = x \vee y - x \wedge y, \sep \forall x,y \in X.$
		%Theorem 1.7, part 3, p. 5, Aliprantis
		\item[c)]
		$$
		x \wedge y  = \frac{1}{2} \bigl(     x + y  - \vert x-y \vert \bigr)  , \sem  x \vee y  = \frac{1}{2} \bigl(     x + y + \vert x-y \vert \bigr)  , \seg \forall x,y \in X.
		$$
		%Theorem 1.7,  p. 5, Aliprantis-Burkinshaw
		\item[d)]
		$$
		\vert x \vert \wedge \vert y \vert = \frac{1}{2} \Bigl \vert   \vert x + y \vert - \vert x-y \vert \Bigr \vert , \sem \vert x \vert \vee \vert y \vert = \frac{1}{2} \Bigl \vert   \vert x + y \vert + \vert x-y \vert \Bigr \vert , \seg \forall x,y \in X.
		$$
		%Theorem 1.7, p. 5, Aliprantis
		
		\item[e)]
		$$
		x, y \in X, x \bot y \ \Rightarrow \ sx \bot ty, \seg \forall s,t \in \R.
		$$
		%p. 21, Exercise 2, Aliprantis
		\item[f)]
		$$
		x, y \in X^+,  x  \wedge y =0  \ \Rightarrow \ \vert sx+t y \vert = \vert s \vert x + \vert  t \vert y, \seg \forall s,t \in \R.
		$$
		As a consequence
		$$
		\Vert sx + t y \Vert \ge \max \bigl\{  \vert s \vert \, \Vert x \Vert,   \vert t \vert \, \Vert y\Vert\bigr\}.
		$$
	\end{enumerate}
\end{lemma}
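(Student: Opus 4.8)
The six assertions are classical facts about the arithmetic of Riesz spaces (exactly the statements quoted from \cite{AlBur}), so my plan is only to record short self-contained derivations, organized as $(a)\Rightarrow(b)\Rightarrow(c)\Rightarrow(d)$ and then $(e),(f)$, using freely the elementary identity $x\vee y+x\wedge y=x+y$ and the invariance of $\wedge,\vee$ under translation and under multiplication by a nonnegative scalar. For $(a)$: given $x=y-z$ with $y,z\in X^+$ and $y\wedge z=0$, first note $x^+=x\vee 0\le y\vee 0=y$ (since $x=y-z\le y$), so $w:=y-x^+\ge 0$; from $x^+-x^-=x=y-z$ one also gets $w=z-x^-$, hence $y=x^++w$ and $z=x^-+w$. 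Then $0=y\wedge z=(x^++w)\wedge(x^-+w)=(x^+\wedge x^-)+w=w$, so $w=0$ and $y=x^+$, $z=x^-$.

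For $(b)$, apply $(a)$ to $u:=x-x\wedge y\ge 0$ and $v:=y-x\wedge y\ge 0$: these satisfy $u\wedge v=(x\wedge y)-(x\wedge y)=0$ and $u-v=x-y$, hence $\vert x-y\vert=u+v=x+y-2(x\wedge y)$; combining with $x\vee y=x+y-x\wedge y$ gives $x\vee y-x\wedge y=\vert x-y\vert$. Then $(c)$ follows by adding and subtracting the identities $x\vee y+x\wedge y=x+y$ and $x\vee y-x\wedge y=\vert x-y\vert$.

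For $(d)$ I would first prove $\vert x\vert\vee\vert y\vert=\tfrac12\bigl(\vert x+y\vert+\vert x-y\vert\bigr)$: writing $\vert x\vert\vee\vert y\vert=(x\vee y)\vee\bigl(-(x\wedge y)\bigr)=p\vee(-q)$ with $p:=x\vee y$, $q:=x\wedge y$, and using $p=\tfrac12\bigl((x+y)+\vert x-y\vert\bigr)$, $-q=\tfrac12\bigl(-(x+y)+\vert x-y\vert\bigr)$ from $(c)$, translation invariance by $\tfrac12\vert x-y\vert$ yields $p\vee(-q)=\tfrac12\vert x-y\vert+\tfrac12\bigl((x+y)\vee(-(x+y))\bigr)=\tfrac12\bigl(\vert x+y\vert+\vert x-y\vert\bigr)$. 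Substituting $x\mapsto x+y$, $y\mapsto x-y$ in this formula gives $\vert x+y\vert\vee\vert x-y\vert=\vert x\vert+\vert y\vert$; feeding this into $\vert x\vert\wedge\vert y\vert=\vert x\vert+\vert y\vert-\vert x\vert\vee\vert y\vert$ together with $(c)$ applied to $\vert x+y\vert,\vert x-y\vert$ produces $\vert x\vert\wedge\vert y\vert=\tfrac12\bigl\vert\,\vert x+y\vert-\vert x-y\vert\,\bigr\vert$, as required.

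Finally, for $(e)$ the point is that if $a,b\in X^+$ with $a\wedge b=0$ then $(\lambda a)\wedge(\mu b)=0$ for all $\lambda,\mu\ge 0$, since with $\nu:=\max\{\lambda,\mu\}$ one has $0\le(\lambda a)\wedge(\mu b)\le(\nu a)\wedge(\nu b)=\nu(a\wedge b)=0$; applying this with $a=\vert x\vert$, $b=\vert y\vert$, $\lambda=\vert s\vert$, $\mu=\vert t\vert$ gives $\vert sx\vert\wedge\vert ty\vert=0$. For $(f)$, $x\wedge y=0$ with $x,y\ge0$ forces $sx\bot ty$ by $(e)$, and a short case analysis on the signs of $s,t$ using $(a)$ (for instance, when $s\ge 0\ge t$, the disjoint positives $sx$ and $-ty$ give $(sx+ty)^+=sx$, $(sx+ty)^-=-ty$) shows $\vert sx+ty\vert=\vert s\vert x+\vert t\vert y$ in every case; the norm estimate is then immediate from $0\le\vert s\vert x\le\vert s\vert x+\vert t\vert y$ and $0\le\vert t\vert y\le\vert s\vert x+\vert t\vert y$ together with the lattice-norm property. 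I do not foresee a genuine obstacle; the only steps demanding a little care are the translation bookkeeping in $(d)$ and the sign case-analysis in $(f)$, and in a final write-up one could simply cite \cite{AlBur} for $(a)$--$(d)$ and deduce $(e)$--$(f)$ from them.
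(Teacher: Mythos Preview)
Your derivations are correct. The paper itself does not prove this lemma: it simply states that the facts ``can be found in Theorems 1.5 and 1.7 and Exercise 2 in \cite{AlBur}'' and gives no argument of its own. So there is no proof in the paper to compare with; your self-contained treatment goes well beyond what the authors provide. Your closing remark that ``one could simply cite \cite{AlBur}'' is exactly what they do. One cosmetic point: in part~(d) you obtain $\vert x\vert\vee\vert y\vert=\tfrac12\bigl(\vert x+y\vert+\vert x-y\vert\bigr)$, whereas the lemma as stated has $\tfrac12\bigl\vert\,\vert x+y\vert+\vert x-y\vert\,\bigr\vert$; these agree since the inner expression is already nonnegative, but in a polished version you might note this explicitly.
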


Later we will  also use the following  result.

\begin{proposition}
	\label{pro-sep-pos-ort}
	Let $X$be a Banach lattice and assume that $x, y \in X^+$  satisfy  $x \ne 0$ and $x  \wedge y =0.$ Then 
	there is a positive functional $x^* \in S_{X^*}$ such that  $x^*(x)= \Vert x \Vert $ and $x^*(y)=0.$
\end{proposition}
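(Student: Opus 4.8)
The plan is to pass to the quotient of $X$ by the closed ideal generated by $y$: in that quotient $x$ survives as a nonzero positive element of the same norm, while $y$ is killed, and then it suffices to lift a suitable positive functional. Let $I$ be the norm closure of the principal ideal $I_y=\{z\in X:\ |z|\le\lambda y\ \text{for some }\lambda\ge 0\}$; since the modulus and the lattice operations are norm continuous, $I$ is a closed ideal of $X$. The first step is to check that $x$ is disjoint from every element of $I$: if $z\in I_y$ with $|z|\le\lambda y$ and $\lambda\ge 0$, then $x\wedge|z|\le x\wedge(\lambda y)=\lambda\,(x\wedge y)=0$, and passing to norm limits the same holds for all $z\in I$. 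Consequently, for each $z\in I$, the disjointness of $x$ and $|z|$ together with Lemma \ref{le-facts-lattices}(b) and the reverse triangle inequality gives $x\le x\vee|z|=\bigl|x-|z|\bigr|\le|x-z|$, so $\|x-z\|\ge\|x\|$. Hence $\dist(x,I)=\|x\|$, and in particular $x\notin I$ since $x\ne 0$.

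Next I would form the quotient Banach lattice $Z=X/I$ with the canonical quotient map $q:X\to Z$, which is a positive lattice homomorphism of norm one satisfying $\|q(v)\|=\dist(v,I)$ for every $v\in X$. By the previous step $q(x)\ge 0$, $\|q(x)\|=\|x\|\ne 0$, and $q(y)=0$ because $y\in I$.

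The heart of the argument — and the only delicate point — is to produce a positive functional $\phi\in S_{Z^*}$ with $\phi(q(x))=\|q(x)\|$, i.e. to arrange positivity and norm attainment simultaneously. This is the standard fact that a nonzero positive element of a Banach lattice is normed by a positive norm-one functional; if one wants to spell it out, a Hahn--Banach argument suffices: take the sublinear functional $p(v)=\inf\{\|w\|:\ w\in Z,\ w\ge v\}$ (finite because $w=v^+$ is admissible, and $p(q(x))=\|q(x)\|$ since $0\le q(x)\le w$ forces $\|q(x)\|\le\|w\|$), extend $t\,q(x)\mapsto t\|q(x)\|$ from $\R\, q(x)$ to a functional $\phi$ on $Z$ dominated by $p$, and note that $\phi$ is automatically positive (as $p(-v)\le 0$ for $v\ge 0$) and of norm at most one (as $p(v)\le\|v^+\|\le\|v\|$). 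Finally set $x^*=\phi\circ q\in X^*$: it is positive as a composition of positive maps, it satisfies $x^*(x)=\phi(q(x))=\|x\|$, which forces $\|x^*\|=1$, and $x^*(y)=\phi(q(y))=\phi(0)=0$, as required. A variant that avoids the quotient is to define $\phi_0$ on $\R x\oplus I$ by $\phi_0(sx+z)=s\|x\|$ — which is well defined, of norm one and zero on $I$ — and then take a positivity-preserving Hahn--Banach extension; the obstacle is the same, namely controlling positivity of the extension, and I expect the quotient formulation to keep the bookkeeping cleanest.
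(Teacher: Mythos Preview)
Your argument is correct and complete, modulo one small slip: the identity $x\wedge(\lambda y)=\lambda\,(x\wedge y)$ is false for general $\lambda\ge 0$ (take $x=y=1$ in $\R$ and $\lambda=2$). What you actually need, and what is true, is that $x\wedge y=0$ implies $x\wedge(\lambda y)=0$ for every $\lambda\ge 0$; this is exactly Lemma~\ref{le-facts-lattices}(e) of the paper (disjointness is preserved under scalar multiples). With that correction, every subsequent step --- the distance computation $\dist(x,I)=\Vert x\Vert$, the quotient construction, the Hahn--Banach extension dominated by $p(v)=\inf\{\Vert w\Vert:w\ge v\}$, and the positivity/norming verifications --- goes through as written.

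Your route is genuinely different from the paper's. The paper stays two-dimensional: it shows $\spa\{x,y\}$ is a Riesz subspace, defines $y^*(sx+ty)=s\Vert x\Vert$ there, checks by hand that this is positive and of norm one on the subspace, and then invokes a positive Hahn--Banach extension theorem for Riesz subspaces (\cite[Theorem~39.2]{Zaan}). You instead kill $y$ by passing to the quotient by the closed ideal it generates, reduce the problem to the single fact that a positive element in a Banach lattice is normed by a positive functional, and then supply that fact directly via the sublinear $p$. The paper's approach is more elementary in that it never leaves $X$ and only needs a finite-dimensional calculation plus one citation; your approach is more structural and perhaps more conceptual, isolating the key difficulty (``positive norming'') cleanly, at the cost of importing the quotient-Banach-lattice machinery. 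Both land on a Hahn--Banach step that has to preserve positivity; the paper outsources it, you prove it.
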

\begin{proof}
		In case that  $y=0$ the result follows from  Hahn-Banach extension theorem and part i) of
		Proposition \ref{rem-state-dual-positive}.
	
	If $y\ne 0$  from Lemma  \ref{le-facts-lattices} we obtain that $x$ and $y$ are linearly independent since
	\begin{equation}
	\label{x-y-max}
	\Vert sx + t y \Vert \ge \max \bigl\{  \vert s \vert \, \Vert x \Vert,   \vert t \vert \, \Vert y\Vert\bigr\}, \seg \forall s,t \in \R.
	\end{equation}
	
	We define the functional $y^*$ on $\spa\{x,y\}$  by 
	$$
	y^* (sx+ty) = s \Vert x \Vert, \seg  s,t \in \R.
	$$
	From \eqref{x-y-max} we have that $y^*$ is a continuous linear functional and satisfies
	$$
	y^* (x)= \Vert  x \Vert,  \sep \Vert y^* \Vert =1 \sep \text{and} \sep y^*(y)=0.
	$$
	We claim that $y^*$ is  a positive functional. Otherwise there would be real numbers $a<  0$ and $b> 0$ such that $ax+by \ge 0 $ and so $y \ge \frac{-a}{b} x > 0.$
	Therefore
	$$
	0= x \wedge y \ge \min \Bigl\{ 1, \frac{-a}{b} \Bigr\} x > 0,
	$$
	which is a contradiction. So $y^*$ is a positive functional on $\spa\{x,y\}$. 
	
	Finally we check that $\spa\{x,y\}$ is a Riesz subspace of $X.$ From c) and f) in  Lemma  \ref{le-facts-lattices} we have that
	%MINIMO
	\begin{align}
	(ax+by) \wedge (cx+dy)   &  =    \frac{1}{2} \bigl(  ax+by + cx+dy -  \vert ax+by -cx-dy \vert \bigr)
	\nonumber \\
	&  = \frac{1}{2} \bigl(  ( a +c)  x +  ( b + d)  y -\vert  a - c  \vert x -\vert  b-d  \vert y \bigr) .
	\nonumber  
	\end{align}
	and
	%MAXIMO
	\begin{align}
	(ax+by) \vee (cx+dy)   &  =      \frac{1}{2} \bigl(  ax+by + cx+dy + \vert ax+by -cx-dy \vert \bigr)
	\nonumber \\
	&  = \frac{1}{2} \bigl(  ( a +c)  x +  ( b + d)  y +\vert  a - c  \vert x +\vert  b-d  \vert y \bigr) .
	\nonumber    
	\end{align}
	Hence $\spa\{x,y\}$ is stable under suprema and infima. From \cite[Theorem 39.2]{Zaan}
	%page 249
	there is a positive functional $x^*\in X^*$ such that 
	$$
	\Vert x^* \Vert = \Vert y^* \Vert = 1 \sem \text{and} \sem x^* _{\vert \spa\{x,y\}} = y^*
	$$
	and the proof is finished.
\end{proof}

Firstly we recall the notion of uniformly monotone Banach lattice, which  is well known,  and introduce a new  geometric property  for Banach lattices.

\begin{definition}
	\label{def-UM}
	A real Banach lattice $X$ is \textit{uniformly monotone}  (UM), if for every $0<\varepsilon<1,$ there is 
	$0  <  \delta < \varepsilon$ satisfying the following property
	$$
	x , y  \in X^+   ,   \Vert x+ y\Vert  \leq 1  , \sep \Vert x \Vert >  1 -\delta\ \Rightarrow\ \Vert y\Vert < \varepsilon .
	$$
\end{definition}

\begin{definition}
	\label{def-UMOE}
	A real Banach lattice $X$ is \textit{uniformly  monotone  for orthogonal elements} (UMOE in short), if   for every $0<\varepsilon<1$ there is 
	$0  <  \delta < \varepsilon$ such that
	$$
	x \in B_X  , \Vert x^+ \Vert >  1 -\delta \ \Rightarrow \  \Vert x^-\Vert < \varepsilon .
	$$
\end{definition}

It is clear that  any uniformly monotone Banach lattice is  uniformly monotone for orthogonal elements. It is shown in \cite[Theorem 6]{HKM} that for Banach function spaces  uniform monotonicity and uniform monotonicity for orthogonal elements  are equivalent properties.

\begin{remark}
	A  real Banach lattice $X$ is uniformly  monotone  for orthogonal elements if and only if for every $0<\varepsilon<1,$ there is 
	$0  <  \delta < \varepsilon$ such that
	$$
	x , y  \in X^+   ,   x\wedge y = 0,  \Vert x+ y\Vert  \leq 1     , \Vert x \Vert >  1 -\delta\ \Rightarrow\ \Vert y\Vert < \varepsilon .
	$$		
\end{remark}

Now we can prove the following characterization.

\begin{theorem}
	\label{teo-UMOE-BPBpp}
	Let $X$ be a Banach lattice. Then $X$ is   uniformly monotone  for orthogonal elements if and only if it has the strong Bishop-Phelps-Bollob{\'a}s property for positive functionals. Moreover the function $\eta$ in Definition \ref{def-BPBpp} depends on the function $\delta $ satisfying the condition of uniformly monotonicity for orthogonal elements (see Definition \ref{def-UMOE}).
		\end{theorem}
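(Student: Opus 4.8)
The plan is to prove the equivalence by establishing the two implications separately, in each case exhibiting the required modulus explicitly so that the ``moreover'' clause comes for free.

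\emph{From UMOE to the strong BPBp for positive functionals.} Fix $0<\varepsilon<1$ and let $\delta$ be the modulus of Definition \ref{def-UMOE}. I would take $\eta(\varepsilon):=\min\{\delta(\varepsilon/3),\varepsilon^2/18\}$, which is $<\varepsilon$ and manifestly built from $\delta$. Given $x^*\in X^{*+}\cap S_{X^*}$ and $x_0\in S_X$ with $x^*(x_0)>1-\eta(\varepsilon)$, positivity of $x^*$ gives $\|x_0^+\|\ge x^*(x_0^+)\ge x^*(x_0)>1-\eta(\varepsilon)\ge 1-\delta(\varepsilon/3)$, so UMOE (applied with parameter $\varepsilon/3$, since $x_0\in B_X$) yields $\|x_0^-\|<\varepsilon/3$. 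I would then pass to $z_0:=x_0^+/\|x_0^+\|\in X^+\cap S_X$, noting $\|z_0-x_0\|\le(1-\|x_0^+\|)+\|x_0^-\|<\eta(\varepsilon)+\varepsilon/3$ and $x^*(z_0)\ge x^*(x_0)>1-\eta(\varepsilon)\ge 1-\frac{1}{2}(\varepsilon/3)^2$, and finally apply Proposition \ref{pro-BPBp-pp-general} to the positive pair $(z_0,x^*)$ with $\varepsilon/3$ in place of $\varepsilon$. This produces $y\in X^+\cap S_X$ and $y^*\in X^{*+}\cap S_{X^*}$ with $y^*(y)=1$, $\|y^*-x^*\|<\varepsilon/3<\varepsilon$ and $\|y-x_0\|\le\|y-z_0\|+\|z_0-x_0\|<\varepsilon/3+\eta(\varepsilon)+\varepsilon/3<\varepsilon$; since $y$ is positive, this is exactly the strong BPBp.

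\emph{From the strong BPBp for positive functionals to UMOE.} Fix $0<\varepsilon<1$, let $\eta$ be the strong-BPBp modulus, and I would simply set $\delta(\varepsilon):=\eta(\varepsilon)$. Given $x\in B_X$ with $\|x^+\|>1-\delta(\varepsilon)$, then $x^+\ne 0$, so $x_0:=x/\|x\|\in S_X$ and, by Proposition \ref{pro-sep-pos-ort} applied to the orthogonal pair $x_0^+,x_0^-$, there is $x^*\in X^{*+}\cap S_{X^*}$ with $x^*(x_0^+)=\|x_0^+\|$ and $x^*(x_0^-)=0$; hence $x^*(x_0)=\|x_0^+\|=\|x^+\|/\|x\|\ge\|x^+\|>1-\eta(\varepsilon)$. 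Applying the strong BPBp produces a \emph{positive} $y\in S_X$ and $y^*\in S_{X^*}$ with $y^*(y)=1$ and $\|y-x_0\|<\varepsilon$. The point of the argument is then the lattice estimate: from $y\ge 0$ we get $y-x_0=y-x_0^++x_0^-\ge x_0^--x_0^+$, and since $x_0^+\wedge x_0^-=0$, Lemma \ref{le-facts-lattices} a) gives $(x_0^--x_0^+)^+=x_0^-$, so $(y-x_0)^+\ge x_0^-$ and therefore $\|x^-\|\le\|x_0^-\|\le\|(y-x_0)^+\|\le\|y-x_0\|<\varepsilon$. Thus $X$ is uniformly monotone for orthogonal elements.

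\emph{The main obstacle.} The first implication is essentially bookkeeping: Definition \ref{def-UMOE} lets one discard the small negative part of $x_0$ and Proposition \ref{pro-BPBp-pp-general} does the rest. The delicate part is the converse, and specifically two choices: feeding the strong BPBp a normalized copy of $x$ \emph{itself} (so that the negative part survives as $x_0^-$ rather than being washed out as in $|x_0|$), and then exploiting the positivity of the perturbed point $y$ — the very feature that distinguishes the \emph{strong} property — to convert ``$y$ close to $x_0$'' into the quantitative bound $\|x_0^-\|\le\|y-x_0\|$. It is exactly this step that fails for the non-strong BPBp, which is what one should expect.
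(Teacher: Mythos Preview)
Your argument is correct and follows essentially the same route as the paper: in the forward direction you use UMOE to kill $x_0^-$ and then appeal to Bishop--Phelps--Bollob\'as for positive data (the paper applies ordinary BPB to $x_0$ and then takes absolute values, which is only a cosmetic reordering); in the converse you use Proposition~\ref{pro-sep-pos-ort} exactly as the paper does, and your lattice estimate $(y-x_0)^+\ge x_0^-$ is just a slightly longer version of the paper's one-line observation that $y^-=0$ forces $\|x_0^-\|=\|y^--x_0^-\|\le\|y-x_0\|$.
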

\begin{proof}
	Assume that $X$ is   uniformly monotone  for orthogonal elements with the function $\delta .$ 
	Let be $0  < \epsi < 1,$ $x \in S_X,$ $x^* \in S_{X^*}$ such that 
	$$
	x^* \ge 0 \sem \text{and} \sem x^* (x) > 1 - \min \Bigl\{ \frac{\epsi^2}{2}, \delta (\epsi)\Bigr\}.
	$$
	Since $x^* \ge 0$ we also have   $x^* (\vert x \vert ) \ge x^* (x) > 1 -  \frac{\epsi^2}{2}.$ 
	By Bishop-Phelps-Bollob{\'a}s theorem there are $y\in S_X$ and $y^* \in S_{X^*}$ satisfying 
	\begin{equation}
	\label{abs-v-y*-y}
	y^*(y)=1, \sep \Vert y -  x  \, \Vert < \epsi \sep \text{and} \sep \Vert y ^*  -  x^*  \Vert < \epsi.   
	\end{equation}
	From the inequality $ x^*(x) = x^* (x^+) - x^* (x^-) >  1 - \delta (\epsi)$, since $x^* \ge 0$ we get that  $x^* (x^+) > 1 - \delta (\eee)$ and so $\Vert x^+ \Vert  > 1 - \delta (\epsi).$ By using that $X$ is  uniformly monotone  for orthogonal elements 
	we get that $ \Vert x^- \Vert < \epsi $ and so
	$$
	\Vert x - \vert x \vert \, \Vert = 2 \Vert x^- \Vert < 2 \epsi.
	$$
	From \eqref{abs-v-y*-y}, 
	we obtain that 
	$$
	\Vert \, \vert y \vert - x \Vert \le  \Vert \, \vert y \vert - \vert  x  \vert \Vert  + \Vert \, \vert x \vert - x \Vert   \le \Vert y - x \Vert + 2 \Vert x^- \Vert < 3 \epsi.
	$$
	In view of the previous inequality  and \eqref{abs-v-y*-y} we proved that $X$ has the strong Bishop-Phelps-Bollob{\'a}s property for positive functionals since $\vert y^\ast \vert $ attains its norm at $\vert y \vert $ and $\Vert \vert y^* \vert - x^* \Vert \le \Vert y^* - x^* \Vert < \epsi.$  Let us also notice that the function $\eta $  in Definition \ref{def-BPBpp} depends only on the function $\delta $ appearing in the definition of uniformly monotonicity for orthogonal elements.
	
	Assume now that $X$ has the strong Bishop-Phelps-Bollob{\'a}s property for positive functionals
	with the function $\eta.$ Let $0 < \eee < 1$  and $x \in B_X$ such that  $\Vert x^+ \Vert   > 1 - \eta (\eee).$
It is known that   $ x^+ \wedge x^-=0 .$ So
	by  Proposition \ref{pro-sep-pos-ort} there is a positive functional $x^* \in S_{X^*}$ such that $x^*(x)= \Vert x^+ \Vert > 1 - \eta (\eee) .$ Therefore $x^*(\frac{x}{\Vert x \Vert}) > 1 - \eta (\eee) .$
	
	By the assumption there are elements $z \in S_X$ and $z^* \in S_{X^*}$ such that
	\begin{equation}
	\label{z*-z-bpt}
	z^*\ge 0, \sep z \ge 0, \sep z^* (z) =1,  \sep \Bigl \Vert z - \frac{x}{\Vert x \Vert} \Bigr \Vert< \eee \sep  \text{and }  \Vert z^*- x^* \Vert < \eee.
	\end{equation}
 By  \eqref{z*-z-bpt} we conclude that 
	$$
\Bigl \Vert	  \frac{x^-}{\Vert x \Vert}   \Bigr \Vert= \Bigl \Vert	 z^- -  \frac{x^-}{\Vert x \Vert} \Bigr \Vert \le \Bigl \Vert z - \frac{x}{\Vert x \Vert} \Bigr \Vert < \eee.
	$$
As we proved that $ \Vert x^-   \Vert < \eee \Vert x\Vert \leq\eee,$ $X$ is uniformly monotone for orthogonal elements.
\end{proof}

%\newpage
Our goal now is to provide classes of Banach spaces satisfying the Bishop-Phelps-Bollob{\'a}s property for positive functionals.

\begin{lemma}
	\label{le-pos-closed}
	For any Banach lattice  $X,$ the sets
$X^+$ and $X^-= \{ x \in X: x \le 0\}$ are closed.	
\end{lemma}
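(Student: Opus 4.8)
The plan is to show that the negative-part map $x \mapsto x^-$ is continuous on $X$ (in fact $1$-Lipschitz for the lattice norm), and then to observe that $X^+$ is exactly the set on which this map vanishes, so that $X^+$ is the preimage of the closed set $\{0\}$ under a continuous map.

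The only nontrivial ingredient is a Birkhoff-type estimate $\Vert x^+ - y^+ \Vert \le \Vert x - y \Vert$ for all $x,y \in X$. I would derive it from the order inequality $x^+ \le y^+ + \vert x - y \vert$: indeed $x = y + (x-y) \le y^+ + \vert x-y \vert$ (using $y \le y^+$ and $x-y \le \vert x-y\vert$) and also $0 \le y^+ + \vert x-y\vert$, so the least upper bound of $x$ and $0$ satisfies $x^+ = x\vee 0 \le y^+ + \vert x-y\vert$, that is, $x^+ - y^+ \le \vert x-y\vert$. Interchanging $x$ and $y$ gives $y^+ - x^+ \le \vert x-y\vert$, hence $\vert x^+ - y^+\vert \le \vert x-y\vert$, and the defining property of a lattice norm yields $\Vert x^+ - y^+\Vert \le \Vert x-y\Vert$. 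Since $x^- = (-x)^+$, the same estimate reads $\Vert x^- - y^-\Vert \le \Vert x-y\Vert$, so $x \mapsto x^-$ is $1$-Lipschitz (and likewise $x \mapsto x^+$).

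Next I would record the elementary equivalence $x \in X^+ \iff x^- = 0$: if $x \ge 0$ then $-x \le 0$, so $x^- = (-x)\vee 0 = 0$; conversely if $x^- = 0$ then $x = x^+ - x^- = x^+ \ge 0$. Consequently, if $x_n \to x$ in $X$ with $x_n \in X^+$, then $\Vert x^- \Vert = \Vert x^- - x_n^-\Vert \le \Vert x - x_n\Vert \to 0$, whence $x^- = 0$ and $x \in X^+$; thus $X^+$ is closed. Finally, $X^- = \{x \in X : x^+ = 0\} = -X^+$ is closed as well, either because $x \mapsto x^+$ is continuous and vanishes exactly on $X^-$, or because $X^-$ is the image of the closed set $X^+$ under the homeomorphism $x \mapsto -x$. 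There is no genuine obstacle here; the point worth isolating is the $1$-Lipschitz continuity of the lattice operations, which rests solely on the order inequality above and the lattice-norm property.
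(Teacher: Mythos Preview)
Your argument is correct and follows essentially the same route as the paper: both rely on the $1$-Lipschitz continuity of the lattice operations (the paper states the inequality $\Vert x^+ - y^+\Vert \le \Vert x-y\Vert$ without proof and concludes that $X^+$ is closed; you supply a full derivation and make the preimage argument explicit via $X^+=\{x:x^-=0\}$). The only difference is that the paper phrases it through $x\mapsto x^+$ while you phrase it through $x\mapsto x^-$, which is immaterial.
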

\begin{proof}
	It is satisfied that
	$$
	\Vert x^+- y^+ \Vert \le \Vert x - y \Vert, \seg \forall  x \in X,
	$$
	so the mapping $x \mapsto x^+$ is continuous on $X.$ As a consequence, $X^+= \{ x \in X: 0 \le x\}$ is  closed. The same argument  holds true for $X^-.$
	\end{proof}

\begin{proposition}
	\label{finite-newerer}
	Let $X$ and $Y$ be finite-dimensional Banach lattices. For every $\varepsilon > 0, $   there exists
	$\delta  > 0 $ such that whenever $S \in  S_{L(X,Y)}$ is a positive operator there is a  positive and linear operator $T \in  S_{L(X,Y)} $ such that the
	following conditions hold:

	\begin{enumerate}
		\item[i)] $ \Vert T- S \Vert < \varepsilon$, and
		
		\item[ii)]  for all $x_0 \in  S_X $ satisfying $\Vert S(x_0) \Vert > 1-\delta ,$ there is $u_0 \in   S_X$ such that $\Vert T(u_0)\Vert=1$ and such
		that $ \Vert u_0- x_0 \Vert < \varepsilon .$
	\end{enumerate}
	
\end{proposition}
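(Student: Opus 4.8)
I would argue by contradiction, exploiting the compactness supplied by finite dimensionality — in particular, that both the set of positive norm-one operators and the sphere $S_X$ are compact. Set $P := \{ S \in S_{L(X,Y)} : S \ge 0 \}$. Since $Y^+$ is closed (Lemma~\ref{le-pos-closed}) and positivity of $S$ means $S(X^+) \subseteq Y^+$, the set $P$ is closed; being also bounded inside the finite-dimensional space $L(X,Y)$, it is compact. For $S \in P$ write $\mathrm{att}(S) := \{ u \in S_X : \Vert S(u) \Vert = 1 \}$, which is nonempty (as $S_X$ is compact and $\Vert S \Vert = 1$) and compact.

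Suppose the assertion fails for some fixed $\varepsilon > 0$. Negating the statement, for each $n \in \N$ (taking $\delta = 1/n$) there is $S_n \in P$ such that for every $T \in P$ one has either $\Vert T - S_n \Vert \ge \varepsilon$, or else there exists $x_0 \in S_X$ with $\Vert S_n(x_0) \Vert > 1 - 1/n$ and $\Vert u_0 - x_0 \Vert \ge \varepsilon$ for all $u_0 \in \mathrm{att}(T)$. By compactness of $P$ I pass to a subsequence with $S_n \to S \in P$. Then $\Vert S - S_n \Vert < \varepsilon$ for all large $n$, so applying the dichotomy above to the fixed operator $T := S \in P$ rules out the first alternative and produces $x_n \in S_X$ with $\Vert S_n(x_n) \Vert > 1 - 1/n$ and $\dist\bigl(x_n, \mathrm{att}(S)\bigr) \ge \varepsilon$.

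Finally, passing to a further subsequence, $x_n \to x_0 \in S_X$ by compactness of $S_X$. From $\Vert S_n(x_n) - S(x_0) \Vert \le \Vert S_n - S \Vert + \Vert x_n - x_0 \Vert \to 0$ one gets $\Vert S(x_0) \Vert = \lim_n \Vert S_n(x_n) \Vert \ge 1$, so $\Vert S(x_0) \Vert = 1$, i.e.\ $x_0 \in \mathrm{att}(S)$; but $\Vert x_n - x_0 \Vert \ge \dist(x_n, \mathrm{att}(S)) \ge \varepsilon$ for every $n$, contradicting $x_n \to x_0$. This proves the proposition, and the $\delta$ so obtained is automatically uniform over $S \in P$.

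The point requiring care is the bookkeeping of quantifiers: the statement allows $T$ to depend on $S$, and the whole argument hinges on the observation that, once $S_n \to S$, the correct choice is the limit operator $T = S$ itself — which is available precisely because $P$ is closed, that is, because the positive cone of operators is closed. That is essentially the only place where the lattice structure enters; everything else is routine compactness together with the triangle inequality. (Replacing $P$ by $S_{L(X,Y)}$ throughout yields the same statement without the positivity requirement.)
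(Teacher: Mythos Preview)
Your argument is correct and is essentially the approach the paper intends: the paper's own proof simply points to \cite[Proposition~2.4]{AAGM} (which is exactly this compactness-by-contradiction argument in the non-positive case) and observes, via Lemma~\ref{le-pos-closed}, that the set of positive operators is closed, so the same argument goes through with $S_{L(X,Y)}$ replaced by $P$. You have reconstructed precisely that argument and correctly isolated the one place the lattice hypothesis is used.
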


\begin{proof}
	The proof is similar to the proof of [Proposition 2.4] in \cite{AAGM}. The only difference is that  in the current proof  we assume that all operators are positive. Notice that because of Lemma \ref{le-pos-closed}, the set of positive (and bounded) operators between two Banach lattices is  a closed subset of  the space of bounded and linear operators between them.
\end{proof}

So from this observation we conclude the next result.

\begin{corollary}\label{finite-new}
	Assume that $X$ and $Y$ are finite-dimensional Banach lattices. Then $L(X,Y)$ has the  Bishop-Phelps-Bollob\'{a}s property for positive operators.	
\end{corollary}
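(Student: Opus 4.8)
The plan is to read the corollary off directly from Proposition \ref{finite-newerer}, whose conclusion is in fact a uniform (in the operator variable) strengthening of the Bishop-Phelps-Bollob\'{a}s property for positive operators as formulated in Definition \ref{def-BPBp-pos}. Indeed, in that definition the perturbed operator $T$ is allowed to depend on both $S$ and $x_0$, whereas Proposition \ref{finite-newerer} produces a single $T$ (depending on $S$ alone) that works simultaneously for every admissible $x_0$; so the implication goes in the needed direction.

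Concretely, I would fix $0 < \varepsilon < 1$, apply Proposition \ref{finite-newerer} with this $\varepsilon$ to obtain the corresponding $\delta > 0$, and set $\eta(\varepsilon) := \min\{\delta, \varepsilon/2\}$, so that $0 < \eta(\varepsilon) < \varepsilon$ as required. Then, given any positive operator $S \in S_{L(X,Y)}$, the proposition yields a positive operator $T \in S_{L(X,Y)}$ with $\Vert T - S\Vert < \varepsilon$; and if $x_0 \in S_X$ satisfies $\Vert S(x_0)\Vert > 1 - \eta(\varepsilon)$, then since $1 - \eta(\varepsilon) \ge 1 - \delta$ we get $\Vert S(x_0)\Vert > 1 - \delta$, so part ii) of the proposition provides $u_0 \in S_X$ with $\Vert T(u_0)\Vert = 1$ and $\Vert u_0 - x_0\Vert < \varepsilon$. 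The pair $(u_0, T)$ is then exactly what Definition \ref{def-BPBp-pos} asks for, and hence $L(X,Y)$ has the Bishop-Phelps-Bollob\'{a}s property for positive operators.

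I do not expect any genuine obstacle in this deduction: all the substance sits in Proposition \ref{finite-newerer}, which is obtained by repeating the argument of \cite[Proposition 2.4]{AAGM} inside the cone of positive operators, this being legitimate because that cone is a closed subset of $L(X,Y)$ by Lemma \ref{le-pos-closed} (and, in finite dimensions, one is working over a compact set of operators, so the perturbation/compactness argument carries over verbatim). The only point requiring a moment's care in the passage to the corollary is the truncation of $\delta$ that forces the modulus $\eta(\varepsilon)$ to be strictly smaller than $\varepsilon$, which is the one extra constraint imposed by Definition \ref{def-BPBp-pos}.
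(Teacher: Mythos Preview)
Your proposal is correct and follows exactly the route intended in the paper: the corollary is stated there without proof, with the remark ``So from this observation we conclude the next result,'' meaning it is read off directly from Proposition~\ref{finite-newerer}. Your added details (the truncation $\eta(\varepsilon)=\min\{\delta,\varepsilon/2\}$ and the observation that the proposition gives a strengthening where $T$ depends only on $S$) are precisely the routine verifications the paper leaves implicit.
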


\begin{corollary}\label{pro-finite-BPBpp}
	Every finite dimensional Banach lattice has  Bishop-Phelps-Bollob\'{a}s property for positive functionals.
\end{corollary}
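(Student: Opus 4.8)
The plan is to obtain this as the one-dimensional case of Corollary \ref{finite-new}. First I would note that $\R$, with its usual order and the absolute value as norm, is a finite-dimensional Banach lattice, that $L(X,\R)$ is isometrically $X^*$, and that an operator in $L(X,\R)$ is positive exactly when it is a positive functional, since $\R^+=[0,\infty)$. Thus, for a finite-dimensional Banach lattice $X$, Corollary \ref{finite-new} applied to the pair $(X,\R)$ yields, for every $0<\tilde\varepsilon<1$, a number $0<\eta(\tilde\varepsilon)<\tilde\varepsilon$ witnessing the Bishop-Phelps-Bollob{\'a}s property for positive operators on $L(X,\R)$, keeping in mind that for $S=x^*\in S_{X^*}$ and $x_0\in S_X$ one has $\Vert S(x_0)\Vert=\vert x^*(x_0)\vert$.

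Next, given $0<\varepsilon<1$, I would set $\tilde\varepsilon:=\min\{\varepsilon,1/3\}$ and take $\eta(\varepsilon):=\eta(\tilde\varepsilon)$ as the modulus for the positive-functional property, noting that $0<\eta(\varepsilon)<\tilde\varepsilon\le\varepsilon$. For $x^*\in S_{X^*}$ with $x^*\ge 0$ and $x_0\in S_X$ with $x^*(x_0)>1-\eta(\varepsilon)$, positivity of $x^*$ gives $x^*(x_0)>0$, hence $\Vert x^*(x_0)\Vert=x^*(x_0)>1-\eta(\tilde\varepsilon)$, so the hypothesis of the positive-operator property (Definition \ref{def-BPBp-pos}) holds with $S=x^*$. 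It produces $u_0\in S_X$ and a positive functional $y^*\in S_{X^*}$ with $\vert y^*(u_0)\vert=1$, $\Vert u_0-x_0\Vert<\tilde\varepsilon$ and $\Vert y^*-x^*\Vert<\tilde\varepsilon$.

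The one point that is not an immediate translation is the sign of $y^*(u_0)$: I need $y^*(u_0)=1$, not merely $\vert y^*(u_0)\vert=1$. Here I would rule out $y^*(u_0)=-1$ by a closeness estimate: that case would force $y^*(x_0)\le y^*(u_0)+\Vert y^*\Vert\,\Vert u_0-x_0\Vert<-1+\tilde\varepsilon$ while simultaneously $y^*(x_0)\ge x^*(x_0)-\Vert y^*-x^*\Vert>1-\eta(\tilde\varepsilon)-\tilde\varepsilon>1-2\tilde\varepsilon$, so $1-2\tilde\varepsilon<-1+\tilde\varepsilon$, i.e. $\tilde\varepsilon>2/3$, contradicting $\tilde\varepsilon\le 1/3$; this is the only reason for truncating $\varepsilon$ to $\tilde\varepsilon$. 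Hence $y^*(u_0)=1$, and putting $y:=u_0$ gives $y\in S_X$, $y^*\in X^{*+}\cap S_{X^*}$, $y^*(y)=1$, $\Vert y-x_0\Vert<\varepsilon$ and $\Vert y^*-x^*\Vert<\varepsilon$, which is precisely the Bishop-Phelps-Bollob{\'a}s property for positive functionals. The main (and only mild) obstacle is this sign normalization; everything else is a direct specialization of the positive-operator result already established.
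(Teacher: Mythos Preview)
Your proposal is correct and follows the same route the paper intends: the corollary is stated in the paper without proof, immediately after Corollary~\ref{finite-new}, so the implied argument is precisely the specialization $Y=\R$ that you carry out. The one point you treat explicitly that the paper glosses over is the sign normalization (passing from $\vert y^*(u_0)\vert=1$ to $y^*(u_0)=1$); your closeness estimate handles it cleanly, and this is indeed the only nontrivial step in the reduction.
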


In order to state another  result for  the Bishop-Phelps-Bollob{\'a}s property for positive functionals, we introduce the following two notions.

\begin{definition}
	\label{def-SM}
	A real Banach lattice $X$ is \textit{strongly  monotone} (SM for short)  if for every $0<\varepsilon<1$ there is 
	$0  <  \delta < \varepsilon$ satisfying the following property
	$$
	x \in B_X, \Vert x^+ \Vert   > 1 -\delta \ \Rightarrow \  \exists b \in [0 , 1]  : \frac{x^+}{\Vert x^+\Vert} - b x^- \in S_X \sep \text{and} \sep  \Vert bx^- - x^-\Vert < \varepsilon.
	$$
\end{definition}

\begin{definition}
	\label{def-WM}
	A  real Banach lattice $X$ is \textit{weakly  monotone}  (WM for short) if for every $0<\varepsilon<1$ there is 
	$0  <  \delta < \varepsilon$ such that
	$$
	x \in B_X,  \Vert x^+ \Vert  > 1 -\delta \ \Rightarrow  \ \exists y \in S_X :  y^+ \in S_X \sep  \text{ and} \sep  \Vert y- x\Vert < \varepsilon.
	$$	
\end{definition}

It is clear that  any  Banach lattice that is uniformly monotone for orthogonal elements is also strongly  monotone. Strongly monotonicity implies weakly monotonicity.

\begin{theorem}
	\label{teo-SM-HNA-BPBpp}
	Let $X$ be a Banach lattice. If $X$ is  strongly  monotone and has the hereditary norm attaining property then $X$ has the Bishop-Phelps-Bollob{\'a}s property for positive functionals.   The  Banach lattice $X$ is    weakly monotone  whenever it  has the Bishop-Phelps-Bollob{\'a}s property for positive functionals.
\end{theorem}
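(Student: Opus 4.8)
I would prove the two implications of the statement separately.

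\textbf{Strong monotonicity and the hereditary norm attaining property give the Bishop--Phelps--Bollob\'as property for positive functionals.} Fix $0<\epsi<1$ and let $x^*\in S_{X^*}$ with $x^*\ge0$ and $x_0\in S_X$ with $x^*(x_0)>1-\eta$, where the modulus $\eta=\eta(\epsi)>0$ is pinned down at the end. The first move is the classical Bishop--Phelps--Bollob\'as theorem applied to $(x_0,x^*)$: if $\eta<\epsi_2^2/2$, it yields $w\in S_X$ and $w^*\in S_{X^*}$ with $w^*(w)=1$, $\|w-x_0\|<\epsi_2$ and $\|w^*-x^*\|<\epsi_2$. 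The positive functional will be $y^*:=w^{*+}/\|w^{*+}\|$. Since $x^*\ge0$, we have $\|w^{*+}-x^*\|=\|(w^*)^+-(x^*)^+\|\le\|w^*-x^*\|<\epsi_2$, so $\|w^{*+}\|>1-\epsi_2>0$ and $\|y^*-x^*\|<2\epsi_2$. Part (i) of Proposition \ref{rem-state-dual-positive} applied to $w^*(w)=1$ gives $w^{*+}(w^-)=0$, and the hereditary norm attaining property applied to $w^*(w)=\|w^*\|\,\|w\|=1$ gives $w^{*+}(w^+)=\|w^{*+}\|\,\|w^+\|$; hence $y^*$ attains its norm at $w^+/\|w^+\|$, noting that $\|w^+\|\ge\|x_0^+\|-\|w^+-x_0^+\|>(1-\eta)-\epsi_2>0$, where $\|x_0^+\|>1-\eta$ follows from $x^*(x_0^+)\ge x^*(x_0)>1-\eta$ and $x^*(x_0^+)\le\|x_0^+\|$.

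\textbf{Producing the unit vector $y$.} Because $w^+/\|w^+\|$ is close to $x_0^+$ but need not be close to $x_0$, I would invoke strong monotonicity on $w\in B_X$: since $\|w^+\|>1-\eta-\epsi_2$, which is $>1-\delta(\epsi_3)$ for the corresponding modulus once the parameters are small, there is $b\in[0,1]$ with
$$
y:=\frac{w^+}{\|w^+\|}-b\,w^-\ \in\ S_X,\qquad \|(1-b)\,w^-\|<\epsi_3 .
$$
By parts (a) and (e) of Lemma \ref{le-facts-lattices}, $y^+=w^+/\|w^+\|$ and $y^-=b\,w^-$, so, using $w^{*+}(w^-)=0$ and $w^{*+}(w^+)=\|w^{*+}\|\,\|w^+\|$,
$$
y^*(y)=\frac{1}{\|w^{*+}\|}\Bigl(w^{*+}\bigl(\tfrac{w^+}{\|w^+\|}\bigr)-b\,w^{*+}(w^-)\Bigr)=\frac{1}{\|w^{*+}\|}\cdot\frac{w^{*+}(w^+)}{\|w^+\|}=1 .
$$
Moreover $\|y-w\|\le(1-\|w^+\|)+\|(1-b)w^-\|<(\eta+\epsi_2)+\epsi_3$, whence $\|y-x_0\|<\eta+2\epsi_2+\epsi_3$. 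It then remains to fix the constants in order: take $\epsi_3:=\epsi/6$; then $\epsi_2\in(0,\epsi/6)$ with $\epsi_2<\tfrac12\,\delta(\epsi_3)$; then $\eta\in(0,\epsi/6)$ with $\eta<\min\{\epsi_2^2/2,\ \tfrac12\,\delta(\epsi_3)\}$. With these choices $\eta<\epsi$, every inequality used above holds, $\|y-x_0\|<\eta+2\epsi_2+\epsi_3<\epsi$ and $\|y^*-x^*\|<2\epsi_2<\epsi$, so $X$ has the Bishop--Phelps--Bollob\'as property for positive functionals.

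\textbf{The Bishop--Phelps--Bollob\'as property for positive functionals gives weak monotonicity.} Let $\eta$ be the modulus of the Bishop--Phelps--Bollob\'as property for positive functionals and fix $0<\epsi<1$; set $\delta:=\eta(\epsi/2)$, so $0<\delta<\epsi$. Let $x\in B_X$ with $\|x^+\|>1-\delta$. Then $x^+\ne0$ and $x^+\wedge x^-=0$, so Proposition \ref{pro-sep-pos-ort} gives a positive $x^*\in S_{X^*}$ with $x^*(x^+)=\|x^+\|$ and $x^*(x^-)=0$; thus $x^*(x)=\|x^+\|$, and since $\|x\|\le1$ the unit vector $x/\|x\|$ satisfies $x^*(x/\|x\|)=\|x^+\|/\|x\|\ge\|x^+\|>1-\eta(\epsi/2)$. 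Applying the property with parameter $\epsi/2$ yields $y\in S_X$ and a positive $y^*\in S_{X^*}$ with $y^*(y)=1$, $\bigl\|y-\tfrac{x}{\|x\|}\bigr\|<\epsi/2$ and $\|y^*-x^*\|<\epsi/2$. Since $y^*\ge0$, part (i) of Proposition \ref{rem-state-dual-positive} gives $y^*(y^-)=0$, hence $y^*(y^+)=y^*(y)+y^*(y^-)=1$, so $1\le\|y^+\|\le\|y\|=1$, i.e.\ $y^+\in S_X$. Finally $\|y-x\|\le\bigl\|y-\tfrac{x}{\|x\|}\bigr\|+(1-\|x\|)<\epsi/2+\delta<\epsi$, which is weak monotonicity with modulus $\delta(\epsi)=\eta(\epsi/2)$.

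\textbf{Expected main obstacle.} The delicate direction is the first one: one must produce a positive functional that attains its norm \emph{exactly}, together with a unit vector where this happens, while staying $\epsi$-close to the given pair. The hereditary norm attaining property is exactly what guarantees that passing from the Bishop--Phelps--Bollob\'as functional $w^*$ to its positive part $w^{*+}$ preserves norm attainment (at $w^+$), and part (i) of Proposition \ref{rem-state-dual-positive} removes the cross term $w^{*+}(w^-)$. Strong monotonicity is precisely the hypothesis that permits re-attaching the negative part $b\,w^-$ to $w^+/\|w^+\|$ without leaving the unit sphere and without moving far from $w$, so that the final $y$ is at once of norm one, a norm-attaining point for $y^*$, and close to $x_0$; without it one would be forced to take $y$ essentially positive, which fails whenever $x_0^-$ is large. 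Everything else is routine bookkeeping of the auxiliary constants.
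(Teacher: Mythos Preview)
Your proof is correct and follows essentially the same route as the paper's: apply the classical Bishop--Phelps--Bollob\'as theorem, normalize the positive part of the resulting functional, use the hereditary norm attaining property together with Proposition~\ref{rem-state-dual-positive}(i) to see that it attains its norm at $w^+/\Vert w^+\Vert$, and then use strong monotonicity to adjoin $-b\,w^-$ and land on $S_X$ close to the original point; the converse via Proposition~\ref{pro-sep-pos-ort} is identical. The only cosmetic difference is that you bound $\Vert w^+\Vert$ from below via $\Vert x_0^+\Vert-\Vert w^+-x_0^+\Vert$, whereas the paper bounds it via $y^{*+}(y^+)=1-y^{*-}(y^-)>1-\delta(\epsi)$; both work, and your constant bookkeeping is a bit more explicit.
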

\begin{proof}
We begin by proving the first statement.	Let be $0 < \eee < 1$ and $\delta $  the function satisfying Definition \ref{def-SM}. We can assume that $\delta (t) \le t$ for every $t \in (0,1).$ 
		Assume that $x^* \in S_{X^*}$ is positive and $x \in S_X$ are such that $x^*(x) > 1 - \frac{\delta ^2 (\eee)}{2}.$ By applying Bishop-Phelps-Bollob{\'a}s theorem there are $y^* \in S_{X^*} $ and $y \in S_X$ satisfying also 
		\begin{equation}
		\label{y*-y-bpbt}
		y^*(y)=1, \sep \Vert y^* - x^* \Vert < \delta (\eee)  \sep \text{and} \sep \Vert y - x \Vert < \delta (\eee),
		\end{equation}
		so
		\begin{equation}
		\label{y*+-x*+}
	\Vert y^{*+} - x^{*} \Vert = 	\Vert y^{*+} - x^{*+} \Vert  < \delta (\eee).
		\end{equation}	
		Therefore 
		$$
		\Vert y^{*+} \Vert	 > 1 - \delta (\eee) \ge  1 - \eee > 0
		$$
		and 
			\begin{equation}
		\label{y*-x*-}
		\Vert y^{*-}  \Vert = \Vert y^{*-} - x^{*-} \Vert  \le 
		\Vert y^* - x^* \Vert <  \delta (\eee).
		\end{equation}		
		Hence
		\begin{equation}
		\label{y*+-y*}
		\Bigl \Vert  \frac{y^{*+}}{\Vert y^{*+}\Vert}  - y^{*} \Bigr\Vert \le  \bigl \vert 1 - \Vert  y^{*+} \Vert \, \bigr \vert  +  \Vert  y^{*-} \Vert <  2 \eee.
		\end{equation}	
		By Proposition \ref{rem-state-dual-positive} we know that $y^{*+}(y^-)=y^{*-}(y^+)=0.$ As a consequence we have that
		$$
		1= y^*(y)= y^{*+}(y^+) + y^{*-}(y^-).
		$$
		So  we obtain that
		\begin{equation}
		\label{norm-y+}
		\min\Bigl\{ \Vert y ^+ \Vert, \Vert y^{*+} \Vert \Bigr\}  \ge  y^{*+} (y^+) =  1- y^{*-} (y^-)  \ge    1 - \Vert  y^{*-} \Vert  > 1- \delta (\eee) \ge 1-  \eee > 0.
		\end{equation}	
		Since $X$ is strongly  monotone there is $b \in [0,1]$ such that 
		\begin{equation}
		\label{z-norm-1}
		\Bigl \Vert \frac{ y^+}{ \Vert y^+ \Vert} - by^- \Bigr \Vert =1 \sem \text{and} \sem \Vert by^- - y ^- \Vert < \eee,
		\end{equation}
		so
		\begin{align}
		\Bigl \Vert  \frac{ y^+}{ \Vert y^+ \Vert} - by^-  -y \Bigr \Vert   &  \le    
		\Bigl \Vert  \frac{ y^+}{ \Vert y^+ \Vert} - y^+ \Bigr \Vert +
		\Vert  y^- - by^-  \Vert
		\nonumber \\
		&  < 1 - \Vert y^+ \Vert + \eee 
		\nonumber  \\
		&  < 2 \eee  \ \ \ \text{\rm (by \eqref{norm-y+})}.
		\nonumber 
		\end{align}
		If we put $z= \frac{y^+}{\Vert y^+ \Vert} - by^-$we have that $z \in S_X$ in view of \eqref{z-norm-1}   and  we just checked that $\Vert z -y \Vert < 2 \eee.$
		By using also  \eqref{y*-y-bpbt} we obtain that 
		\begin{equation}
		\label{z-x}
		\Vert z-x \Vert  \le \Vert z-y \Vert + \Vert y-x \Vert  < 3        \eee .
		\end{equation}	
		We also have that
		\begin{align}
		\label{y^*+-norm-x*}
		\Bigl \Vert  \frac{ y^{*+}}{ \Vert y^{*+} \Vert} - x^* \Bigr \Vert    &  \le    
		\Bigl \Vert  \frac{ y^{*+}}{ \Vert y^{*+} \Vert} - y^* \Bigr \Vert + \Vert y^* - x^* \Vert
		\nonumber \\
		&  < 3 \eee \sem \text{(by   \eqref{y*+-y*}  and \eqref{y*-y-bpbt})}.
		\end{align}
		Since $y^{*+}(y^-)=0$ it is also immediate that 
		$$
		\frac{ y^{*+}}{\Vert    y^{*+} \Vert } (z) = \frac{ y^{*+} (y^+)}{\Vert    y^{*+} \Vert \, \Vert y^+ \Vert  }.
		$$
		By using that  $X$  has the hereditary norm attaining property we have that $y^{*+}(y^+) = \Vert    y^{*+} \Vert \, \Vert y^+ \Vert ,$ so
		$\frac{ y^{*+}}{\Vert    y^{*+} \Vert } (z)=1.$ 
		
		By taking into account also  \eqref{z-x} and  \eqref{y^*+-norm-x*} we proved that $X$ has the Bishop-Phelps-Bollob{\'a}s property for positive functionals.

	Now we prove the second assertion. Let assume that $X$  has  BPBp for positive functionals with the function $\eta $ and  $0 <\eta  (\epsi)< \varepsilon$ for every positive real number $\epsi.$

		Let  fix   $0<\varepsilon<1,$  $0<\eta= \eta (\frac{\epsi}{2})  <\frac{\epsi}{2} $  and $x \in B_X$  such that $\Vert x^+ \Vert   >  1 -\eta > 1- \frac{\varepsilon}{2} > 0 .$ From  Proposition \ref{pro-sep-pos-ort}, there is a positive functional $x^* \in S_{X ^*}$  such that
		$$
		x^*  (x^+) = \Vert x^+ \Vert  >   1-\eta  \sem \text{and} \sem x^*(x^-)=0. 
		$$
		Hence $x^* \bigl( \frac{ x}{ \Vert x \Vert }\bigr )= x^* \bigl( \frac{ x^+}{ \Vert x \Vert }\bigr ) > 1- \eta.$ 
Since $X$ has  the BPBp for positive functionals  with the function $\eta ,$  there are  a positive functional $y^* \in S_{X^*} $    and $y \in S_X$ satisfying also 
\begin{equation}
\label{BpBpp-proof}
y^* (y) = 1, \sem \text{and} \sem    \Bigl\Vert y -\frac{x}{\Vert x\Vert }  \Bigr\Vert < \frac{\epsi}{2} .
\end{equation}
Since $y^*$ is a positive functional we  obtain that $y^* (y^+)=1$ and so $\Vert y^+ \Vert =1.$    From \eqref{BpBpp-proof} we deduce that  
$$
 \Vert y - x \Vert  \le  \Bigl\Vert y -\frac{x}{\Vert x\Vert }  \Bigr\Vert  + \Bigl\Vert \frac{x}{\Vert x\Vert }  - x  \Bigr\Vert < \frac{\epsi}{2}   + 1 - \Vert x \Vert  \le  \frac{\epsi}{2}   + 1 - \Vert x^+ \Vert   < \epsi.
  $$
 Therefore $X$ is   weakly  monotone.
\end{proof}

Now we make a sketch  of some  results known until now

$$
\text {UM}  \ \Rightarrow  \  \text    {UMOE} \ \Rightarrow  \      \text    {SM}  \ \Rightarrow  \ 
    \text    {WM}   
$$

\vskip3mm

$$
 \text {On Banach function spaces} \hspace*{1cm} ~~~~~~~  \text {UM} \ \Leftrightarrow  \ \text{UMOE}
$$

\vskip3mm

$$
\text{strong BPBp for  positive functionals} \ \Rightarrow  \ \text{BPBp for positive functionals}
$$

\vskip3mm

$$
\text{strong BPBp for  positive functionals} \ \Leftrightarrow  \ \text{UMOE}
$$

\vskip3mm

$$
 \text{finite dimensional} \ \Rightarrow  \    \text{BPBp for positive functionals}
$$

\vskip3mm

$$
\text{SM + HNAp} \ \Rightarrow  \ \text{BPBp for positive functionals} \ \Rightarrow  \   \text{WM} 
$$

\section{Examples}

Our  goal now is to provide examples of spaces satisfying the  sufficient condition in  Theorem \ref{teo-SM-HNA-BPBpp}. Later we will  exhibit more examples showing that the converse implications that we wrote above  does not hold. We also show with these examples that the Bishop-Phelps-Bollob{\'a}s property for positive operators is not trivially satisfied for any Banach lattice.

\begin{proposition}
	\label{pro-Lp-CK-VWUM-HNAp}
	The following Banach lattices  are strongly  monotone and have the hereditary norm attaining property:
	\begin{enumerate}
		\item[i)]  $L_p (\mu)$ for any positive measure $\mu$ and $1 \le p < \infty .$
		\item[ii)] $ C(K)$ for any compact and Hausdorff topological space $K.$
		\item[iii)]  $\mathcal{M} (K)$, the space of regular Borel measures on a compact  and Hausdorff topological space $K.$ 
	\end{enumerate}
In fact,  $L_p (\mu)$ for $1 \le  p  < \infty$   and  $\mathcal{M} (K)$ are uniformly monotone.
\end{proposition}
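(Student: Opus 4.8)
The plan is to verify the two properties separately for each class, exploiting the implications recorded above: uniform monotonicity implies uniform monotonicity for orthogonal elements, which implies strong monotonicity, so for $L_p(\mu)$ and $\mathcal{M}(K)$ it suffices to establish uniform monotonicity, whereas $C(K)$ (which in general is not uniformly monotone) needs a direct argument.

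\emph{Strong monotonicity.} For $L_p(\mu)$ I would use the pointwise inequality $(a+b)^p\ge a^p+b^p$, valid for $a,b\ge 0$ and $p\ge 1$: if $x,y\in L_p(\mu)^+$ then $\|x+y\|_p^p\ge\|x\|_p^p+\|y\|_p^p$, so $\|x+y\|_p\le 1$ and $\|x\|_p>1-\delta$ force $\|y\|_p\le\bigl(1-(1-\delta)^p\bigr)^{1/p}$, which is $<\varepsilon$ once $\delta$ is small enough; hence $L_p(\mu)$ is uniformly monotone for $1\le p<\infty$. The space $\mathcal{M}(K)$ is an AL-space, that is, $\|\mu+\nu\|=\|\mu\|+\|\nu\|$ for positive $\mu,\nu$, so uniform monotonicity is immediate (take $\delta=\varepsilon/2$). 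For $C(K)$, given $x\in B_{C(K)}$ with $\|x^+\|>1-\delta$ (hence $x^+\ne 0$ provided $\delta<1$) I would simply take $b=1$: since $x^+\wedge x^-=0$, at each point of $K$ one of $x^+,x^-$ vanishes, so Lemma~\ref{le-facts-lattices} f) gives $\bigl|\frac{x^+}{\|x^+\|}-x^-\bigr|=\frac{x^+}{\|x^+\|}+x^-$, whence $\bigl\|\frac{x^+}{\|x^+\|}-x^-\bigr\|_\infty=\max\{1,\|x^-\|_\infty\}=1$ because $\|x^-\|_\infty\le\|x\|_\infty\le 1$; thus $\frac{x^+}{\|x^+\|}-x^-\in S_{C(K)}$ and $\|bx^--x^-\|=0<\varepsilon$, so $C(K)$ is strongly monotone.

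\emph{Hereditary norm attaining property.} Here I would appeal to Proposition~\ref{rem-state-dual-positive} ii). When $X$ is an AL-space (so $X=L_1(\mu)$ or $X=\mathcal{M}(K)$) one has $\|x^+\|+\|x^-\|=\|x\|$ for every $x$, while $\|z^+\|,\|z^-\|\le\|z\|$ in every Banach lattice; hence whenever $x\in S_X$, $x^*\in S_{X^*}$ and $x^*(x)=1$ we get $\|x^{*+}\|\,\|x^+\|+\|x^{*-}\|\,\|x^-\|\le\|x^+\|+\|x^-\|=1$, which is exactly the hypothesis of Proposition~\ref{rem-state-dual-positive} ii). For $C(K)$ the argument is the same with the roles of $X$ and $X^*$ exchanged: $C(K)^*=\mathcal{M}(K)$ is an AL-space, so $\|x^{*+}\|+\|x^{*-}\|=\|x^*\|$, and $\|x^{*+}\|\,\|x^+\|+\|x^{*-}\|\,\|x^-\|\le(\|x^{*+}\|+\|x^{*-}\|)\|x\|=\|x^*\|\,\|x\|=1$. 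For $L_p(\mu)$ with $1<p<\infty$, where neither $X$ nor $X^*=L_q(\mu)$ is an AL-space, I would instead verify the defining condition directly from the equality case of H\"older's inequality: if $f^*\in L_q(\mu)$, $g\in L_p(\mu)$, $f^*,g\ne 0$ and $f^*(g)=\|f^*\|_q\|g\|_p$, then $|f^*|^q$ and $|g|^p$ are proportional a.e.\ and $f^*g\ge 0$ a.e., so $(f^*)^+$ is a scalar multiple of $(g^+)^{p/q}$; a short computation (using $\tfrac pq+1=p$) then yields $(f^*)^+(g^+)=\|(f^*)^+\|_q\,\|g^+\|_p$, and symmetrically for the negative parts, the degenerate cases $g^+=0$ or $g^-=0$ being trivial.

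I expect the one genuinely technical point to be this last step, the $L_p$ case of the hereditary norm attaining property: one must handle with care the equality case of H\"older's inequality, the normalizing constant, and null-set/measurability issues for a general (possibly non-$\sigma$-finite) $\mu$, the last of which is dealt with by restricting to a $\sigma$-finite set carrying $f^*$ and $g$. The concluding sentence of the statement — that $L_p(\mu)$ and $\mathcal{M}(K)$ are uniformly monotone — is precisely what the strong monotonicity argument proves for those two spaces.
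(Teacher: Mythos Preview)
Your argument is correct and largely parallel to the paper's; the one place where you diverge substantively is the hereditary norm attaining property for $L_p(\mu)$ with $1<p<\infty$. The paper avoids the equality case of H\"older entirely: since $f^+,f^-$ (resp.\ $g^+,g^-$) have disjoint supports, one has $\|f^+\|_p^p+\|f^-\|_p^p=\|f\|_p^p=1$ and $\|g^+\|_q^q+\|g^-\|_q^q=\|g\|_q^q=1$, and then the \emph{discrete} H\"older inequality on $\mathbb{R}^2$ gives
\[
\|g^+\|_q\,\|f^+\|_p+\|g^-\|_q\,\|f^-\|_p\le\bigl(\|g^+\|_q^q+\|g^-\|_q^q\bigr)^{1/q}\bigl(\|f^+\|_p^p+\|f^-\|_p^p\bigr)^{1/p}=1,
\]
after which Proposition~\ref{rem-state-dual-positive}~ii) applies exactly as in your AL/AM cases. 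This sidesteps all the proportionality, normalization and $\sigma$-finiteness bookkeeping you flag as the ``genuinely technical point,'' and works uniformly for arbitrary $\mu$. Your route through the equality case is valid but heavier. (For uniform monotonicity of $L_p$ the paper simply quotes uniform convexity when $p>1$ and checks $p=1$ directly; your pointwise inequality $(a+b)^p\ge a^p+b^p$ handles all $1\le p<\infty$ at once and is arguably tidier.)
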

\begin{proof}
	\textbf{i)}  For $1 <  p < \infty$ the space $L_p(\mu)$ is  uniformly convex and so uniformly  monotone.
	
	For $p=1$ we check that $L_1(\mu)$ is also uniformly monotone. If  $ \eee >0,$ $f, g \in L_1 (\mu)$ are positive elements  such that
	$$
	\Vert f \Vert _1 + \Vert g \Vert _1 = 	\Vert f+g \Vert _1 \le 1  \sep \text{and} \sep   \Vert f \Vert _1 >  1 - \eee \ \Rightarrow  \ \Vert g \Vert _1 <  \eee.
	$$
	Now we show that $L_p(\mu)$ has the hereditary norm attaining property. 
	If $p > 1,$  since the dual of $L_p(\mu)$ is identified  with $L_q (\mu)$  where $\dfrac{1}{p} +  \dfrac{1}{q}=1, $  let  $f \in S_{L_p (\mu)}$ and $g \in S_{L_q (\mu)}.$ 
	Since $f ^+ $ and $f^-$ have disjoint support we have that
	$$
	1 = \Vert f \Vert _p ^p=\Vert f^+\Vert_p ^p + \Vert f^- \Vert_p ^p .
	$$
	From H\"{o}lder inequality it follows that
	$$
	\Vert g^+ \Vert _q  \ \Vert f ^+ \Vert _p +  \Vert g ^- \Vert _q  \ \Vert  f ^- \Vert _p \le 1.
	$$
	The previous inequality implies the hereditary norm attaining property in view of Proposition \ref{rem-state-dual-positive}.
		
	In case that $p=1, $ if $x^* \in S_{ L_1(\mu)^*}$ and $f \in S_{L_1(\mu)} $ we have that
	$$
	\Vert x ^{*+} \Vert \   \Vert f ^+ \Vert _1 + 	\Vert x ^{*-} \Vert \   \Vert f ^- \Vert _1   \le   \Vert f ^+ \Vert _1 +  \Vert f ^- \Vert _1 =1,
	$$
	so  by using again Proposition \ref{rem-state-dual-positive}, $L_1(\mu) $ also has the hereditary norm attaining property.

	\textbf{ii)}  It is clear that 
	$$
	\Vert f \Vert = \max \bigl\{ \Vert f ^+\Vert , \Vert f^- \Vert\bigr\}, \sem \forall f \in C(K).
	$$
	So, if $0 < \eee < 1,$ $f \in B_{C(K)}$ and $\Vert f^+ \Vert > 1- \eee> 0,$ the element 
	$ \dfrac{ f^+}{\Vert f^+ \Vert } -f^- \in S_{ C(K)},$ so $C(K)$ satisfies Definition \ref{def-SM} with $\delta (\eee)= \eee$ and $b=1.$
	
	Now we check that $C(K)$ has the hereditary norm attaining property.  We use that the topological  dual of $C(K)$ is identified with $\mathcal{M}(K),$ the space of regular and Borel measures on $K$,  and   for the dual norm we have that
	\begin{equation}
	\label{M-L1-norm}		
	\Vert \mu \Vert =  		\Vert \mu^+ \Vert + 		\Vert \mu^- \Vert, \seg \forall \mu \in \mathcal{M} (K).
	\end{equation}
	As a consequence we obtain  that 
	\begin{equation}
	\label{proof-CK-HNAp}	
	\Vert x^{*+} \Vert \, \Vert x ^+ \Vert  + \Vert x^{*-} \Vert \, \Vert x ^- \Vert \le 
	\Vert x^{*+} \Vert  + \Vert x^{*-} \Vert  =1, \seg \forall x \in S_{C(K)}, x^* \in S_{ C(K) ^*} .
	\end{equation}
	Hence $C(K)$ has the hereditary norm attaining property.

	\textbf{iii)} From \eqref{M-L1-norm} it follows that $\mathcal{M}(K)$ is uniformly monotone, so it is strongly  monotone.  By using again that $\mathcal{M}(K)$ is an $L$-space and an argument similar to 
	\eqref{proof-CK-HNAp} it is immediate to obtain that $\mathcal{M}(K)$  also has the hereditary norm attaining property.
\end{proof}

\begin{corollary}
	\label{co-Lp-C(K)-BPBpp}
	For any positive measure $\mu$ and $1 \le p < \infty$ the space $L_p(\mu) $ has the  strong  Bishop-Phelps-Bollob{\'a}s property for positive functionals. The same property holds for $\mathcal{M}(K)$  for any compact and Hausdorff topological space $K.$  The space  $ C(K)$ has the   Bishop-Phelps-Bollob{\'a}s property for positive functionals.
\end{corollary}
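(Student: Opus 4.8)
The plan is to obtain all three assertions as purely formal consequences of Proposition \ref{pro-Lp-CK-VWUM-HNAp} together with the two characterizations established in Section 2, so that no further analytic work is required.

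First I would treat $L_p(\mu)$ for $1\le p<\infty$ and $\mathcal{M}(K)$. By Proposition \ref{pro-Lp-CK-VWUM-HNAp} both of these Banach lattices are uniformly monotone, and every uniformly monotone Banach lattice is uniformly monotone for orthogonal elements (as observed immediately after Definition \ref{def-UMOE}). Theorem \ref{teo-UMOE-BPBpp} then gives at once that $L_p(\mu)$ and $\mathcal{M}(K)$ have the strong Bishop-Phelps-Bollob\'as property for positive functionals.

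Next I would treat $C(K)$. Proposition \ref{pro-Lp-CK-VWUM-HNAp} states that $C(K)$ is strongly monotone and has the hereditary norm attaining property, so the first (sufficiency) half of Theorem \ref{teo-SM-HNA-BPBpp} applies directly and yields the Bishop-Phelps-Bollob\'as property for positive functionals.

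There is no real obstacle left: all the genuine content lies in Proposition \ref{pro-Lp-CK-VWUM-HNAp}, and the present statement is essentially a bookkeeping step. The one point worth flagging is why only the weaker property is claimed for $C(K)$: since $\Vert f\Vert=\max\{\Vert f^+\Vert,\Vert f^-\Vert\}$ in $C(K)$, a norm-one function may have $\Vert f^+\Vert$ arbitrarily close to $1$ while $\Vert f^-\Vert$ is also close to $1$, so $C(K)$ is not uniformly monotone for orthogonal elements, and by Theorem \ref{teo-UMOE-BPBpp} it cannot have the strong Bishop-Phelps-Bollob\'as property for positive functionals. This accounts for the distinction drawn in the statement between $C(K)$ and the other two families.
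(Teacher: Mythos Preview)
Your proof is correct and follows exactly the same route as the paper's own proof, which simply cites Proposition \ref{pro-Lp-CK-VWUM-HNAp} together with Theorems \ref{teo-UMOE-BPBpp} and \ref{teo-SM-HNA-BPBpp}. Your added remark explaining why $C(K)$ fails to be uniformly monotone for orthogonal elements (and hence cannot have the strong property) is a nice bonus that the paper does not include.
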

\begin{proof}
	It suffices to use Proposition \ref{pro-Lp-CK-VWUM-HNAp}, Theorems \ref{teo-UMOE-BPBpp} and \ref{teo-SM-HNA-BPBpp}.
\end{proof}

	\begin{definition}
	A norm $\vert ~.~\vert$ on  $ \R^N$  is called \textit{absolute} if it satisfies that
	$$
	\vert  (x_i)  \vert =   \vert ( \vert x_i \vert ) \vert , \sem \forall  (x_i) \in \R^N .
	$$
	An absolute norm $\vert ~.~\vert$ is said to be \textit{normalized}   if $\vert e_i \vert =1$ for every $1\le i \le N $, where $\{ e_i: 1 \le i \le N \}$ is the canonical basis of $\R ^N$. 
\end{definition}

Next we show  that the assumptions in Theorem \ref{teo-SM-HNA-BPBpp}  implying the Bishop-Phelps-Bollobás Theorem for functionals are satisfied by $\R ^2$  with an absolute norm. Later we will exhibit examples showing that this is not the case for  $\R ^3.$  

\begin{proposition}
	\label{pro-R2-HNAp-UMOE}
	Let $\vert \ \vert $ be an absolute norm on $\R ^2.$ Then $X=(\R^2, \vert \ \vert )$ has the hereditary norm attaining property. The space $X$ is also strongly monotone.
\end{proposition}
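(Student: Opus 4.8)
The plan is to establish the two assertions in turn, exploiting throughout that on $\R^2$ the positive and negative parts $x^+,x^-$ of a vector have disjoint supports, so that if both are nonzero then each is carried by a single coordinate axis. I identify $X^*=(\R^2,\Vert\ \Vert_*)$, where $\Vert\ \Vert_*$ is again an absolute norm, and I record the elementary fact, used repeatedly, that $\Vert e_i\Vert\,\Vert e_i\Vert_*=1$ for $i=1,2$: this follows from the monotonicity of absolute norms, since the supremum defining $\Vert e_i\Vert_*$ is attained at a scalar multiple of $e_i$.

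For the hereditary norm attaining property, by Proposition \ref{rem-state-dual-positive}(ii) it suffices to show that $\Vert x^{*+}\Vert\,\Vert x^+\Vert+\Vert x^{*-}\Vert\,\Vert x^-\Vert\le 1$ whenever $x\in S_X$, $x^*\in S_{X^*}$ and $x^*(x)=1$. Write $x=(x_1,x_2)$, $x^*=(a_1,a_2)$. From Proposition \ref{rem-state-dual-positive}(i) we get $x^{*+}(x^-)=x^{*-}(x^+)=0$, and since these are sums of nonnegative terms it follows that $a_i\ge 0$ whenever $x_i>0$ and $a_i\le 0$ whenever $x_i<0$; in particular $x^+,x^{*+}$ are supported on $\{i:x_i>0\}$ and $x^-,x^{*-}$ on $\{i:x_i<0\}$. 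One then runs through the sign patterns of $(x_1,x_2)$. If $x$ has constant sign, one of $x^+,x^-$ vanishes and the bound is immediate from $\Vert x^{*\pm}\Vert\le 1$ and $\Vert x^{\pm}\Vert\le 1$. Up to interchanging the two coordinates and replacing $(x,x^*)$ by $(-x,-x^*)$, the only remaining case is $x_1>0>x_2$, where $x^+=(x_1,0)$, $x^-=(0,\vert x_2\vert)$, $x^{*+}=(a_1,0)$, $x^{*-}=(0,\vert a_2\vert)$, and $\Vert e_i\Vert\,\Vert e_i\Vert_*=1$ gives directly $\Vert x^{*+}\Vert\,\Vert x^+\Vert=a_1x_1=x^{*+}(x^+)$ and $\Vert x^{*-}\Vert\,\Vert x^-\Vert=\vert a_2x_2\vert=x^{*-}(x^-)$, which sum to $x^*(x)=1$. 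Hence $X$ has the hereditary norm attaining property, with equality in each summand.

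For strong monotonicity, fix $0<\epsi<1$ and put $\alpha=\Vert e_1\Vert$, $\beta=\Vert e_2\Vert$. Let $x\in B_X$ with $\Vert x^+\Vert>1-\delta$; if $x^-=0$ then $b=1$ works trivially (and $x^+=0$ is impossible since $\Vert x^+\Vert>1-\delta>0$), so assume $x^+,x^-\ne 0$. Being disjoint these lie on different axes, so after relabelling we may write $x=(s,-t)$ with $s,t>0$; then $\tfrac{x^+}{\Vert x^+\Vert}=(\tfrac1\alpha,0)$ and $\Vert x^-\Vert=t\beta$. Let $c=\sup\{y\ge 0:\Vert(\tfrac1\alpha,y)\Vert\le 1\}$; since $y\mapsto\Vert(\tfrac1\alpha,y)\Vert$ is continuous, nondecreasing on $[0,\infty)$ and equal to $\Vert(\tfrac1\alpha,0)\Vert=1$ at $y=0$, it is identically $1$ on $[0,c]$. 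Then $b:=\min\{1,c/t\}\in[0,1]$ satisfies $\tfrac{x^+}{\Vert x^+\Vert}-bx^-=(\tfrac1\alpha,-bt)\in S_X$ because $bt=\min\{t,c\}\in[0,c]$, and $\Vert bx^--x^-\Vert=(1-b)t\beta=\max\{0,t-c\}\,\beta$.

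It remains to choose $\delta=\delta(\epsi)\in(0,\epsi)$, depending only on $\epsi$ and the norm, so that $\max\{0,t-c\}\,\beta<\epsi$, and this last estimate is the only point that genuinely needs work. Here the compactness of $B_X$ enters: $\vert x\vert=(s,t)\in B_X$ and $\Vert x^+\Vert=s\alpha>1-\delta$ forces $s\to\tfrac1\alpha$ as $\delta\to 0$, and if $s_n\to\tfrac1\alpha$ with corresponding second coordinates $t_n\to t_\infty$ then $(\tfrac1\alpha,t_\infty)\in B_X$ by closedness, whence $t_\infty\le c$. Thus the $\limsup$ of $\sup\{t:(s,t)\in B_X\}$ as $s\to\tfrac1\alpha$ is at most $c$, which yields the required $\delta$ and establishes Definition \ref{def-SM}. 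I expect this estimate to be the main obstacle: in $\R^3$ the negative part may span a two-dimensional coordinate subspace, and scaling it by a single scalar $b$ need not land $\tfrac{x^+}{\Vert x^+\Vert}-bx^-$ on the sphere while keeping $b$ near $1$ — which is precisely why the three-dimensional situation behaves differently (Examples \ref{exam-dim-3-N-HNAp} and \ref{exam-dim-3-N-SM}).
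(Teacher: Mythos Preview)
Your proof is correct. For the hereditary norm attaining property it coincides with the paper's: both reduce via Proposition~\ref{rem-state-dual-positive}(ii) to $\Vert x^{*+}\Vert\,\Vert x^+\Vert+\Vert x^{*-}\Vert\,\Vert x^-\Vert\le 1$, dispose of the sign-definite cases, and in the mixed-sign case observe that each of the four parts lies on a single coordinate axis so that the products collapse to $|u|\,|a|+|v|\,|b|\le 1$. Your explicit remark that $\Vert e_i\Vert\,\Vert e_i\Vert_*=1$ is a clean justification the paper leaves implicit.

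For strong monotonicity the approaches diverge: the paper simply cites \cite[Lemma~2.5]{AMS} and gives no argument, whereas you supply a self-contained proof---push $x^+$ onto a single axis, set $c=\sup\{y\ge 0:\Vert(\tfrac1\alpha,y)\Vert\le 1\}$, choose $b=\min\{1,c/t\}$ so that $\tfrac{x^+}{\Vert x^+\Vert}-bx^-$ lands on $S_X$, and then use compactness of $B_X$ (upper semicontinuity of $s\mapsto\sup\{t:(s,t)\in B_X\}$ at the endpoint $s=1/\alpha$) to bound $(t-c)^+$ uniformly. This elementary route replaces the external reference entirely; the only small point to make explicit is that the final $\delta$ must be the minimum over the two possible axis assignments for $x^+$, which your ``after relabelling'' suppresses but which is harmless since only two cases arise.
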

\begin{proof}
	Firstly we prove that $X$ has the hereditary  norm attaining property. 
	
	Let $x=(a,b) \in S_X$ and $x^*= (u,v) \in S_{X^*}$ such  that  $x^*(x)=1.$ In  case that $x \ge 0, x \le 0, x^* \ge 0 $ or $  x^* \le 0$ the condition
	$$
	\Vert x^{*+} \Vert \, \Vert x^+ \Vert +\Vert x^{*-} \Vert \, \Vert x^- \Vert  \le 1
	$$
	is trivially satisfied.
	
	Otherwise  we have that
	\begin{equation}
	\label{signs-ab-uv}
	ab < 0 \sem \text{and} \sem uv < 0.
	\end{equation}
	Since 
	\begin{equation}
	\label{ineq-x*-x}
	1 = x^* (x)= ua + vb  \le   \vert ua  \vert  + \vert vb\vert  \le 1,
	\end{equation}
	we obtain that
	\begin{equation}
	\label{signs-ua-vb}
	ua >  0 \sem \text{and} \sem  vb > 0.
	\end{equation}
	
	In case that $a > 0,$ from \eqref{signs-ab-uv}  and \eqref{signs-ua-vb} we have that $b<0, u> 0$ and $v < 0,$ so
	$$
	x^+= (a,0), \sep x^-= (0,-b), \sep  x^{*+}= (u,0) \sep \text{and}  \sep  x^{*-}= (0, -v).
	$$  
	In view of \eqref{ineq-x*-x} we get that
	$$
	\Vert x^{*+} \Vert \, \Vert x^+ \Vert +\Vert x^{*-} \Vert \, \Vert x^- \Vert = \vert u \vert \,   \vert a \vert   + \vert v \vert \,  \vert b \vert \,  \le 1.
	$$
	
	If $a < 0,$ from \eqref{signs-ab-uv} we have that $b>0,  v > 0 $ and $u< 0,$  hence
	$$
	x^+= (0,b), \sep x^-= (-a,0), \sep  x^{*+}= (0,v) \sep \text{and}  \sep  x^{*-}= (-u,0).
	$$  
	By using again  \eqref{ineq-x*-x} we obtain that
	$$
	\Vert x^{*+} \Vert \, \Vert x^+ \Vert +\Vert x^{*-} \Vert \, \Vert x^- \Vert = \vert v \vert \,   \vert b \vert   + \vert u \vert \,  \vert a \vert \,  \le 1.
	$$
	By  Proposition  \ref{rem-state-dual-positive} we proved that $X$ has the hereditary norm attaining property.
	
	From \cite[Lemma 2.5]{AMS} it follows that $X$ is also strongly monotone.
\end{proof}

\begin{example}
	\label{exam-dim-3-N-HNAp}
	There is a norm on $\R^3$ which is absolute and normalized, but it does not satisfy the hereditary norm attaining property.
\end{example}

\begin{proof}
	Let 	$\vert \ \vert$   be the norm  on $\R^2 $ given by
	$$
	\vert (r,s) \vert = \max \Bigl\{ \vert r \vert, \vert s \vert, \frac{2}{3} \bigl( \vert r \vert + \vert s \vert \bigr)	\Bigr\}.
	$$
	Clearly the previous norm is a normalized  absolute norm on $\R^2$ and satisfies
	$$
	1= \Bigl \vert  \Bigl(\frac{\eee _1}{2},\eee_2 \Bigr) \Bigr \vert =\Bigl \vert  \Bigl(\eee_1, \frac{\eee _2}{2} \Bigr) \Bigr \vert = \Bigl \vert  \Bigl(\frac{3}{4}, \frac{-3}{4}\Bigr) \Bigr \vert, \sep \forall \eee_i \in \{\pm 1\},\  i=1,2.
	$$
	Define the mapping  $\Vert \ \Vert $ on $\R ^3$ by	
	$$
	\Vert (r,s,t) \Vert = \vert (r,s) \vert + \vert t\vert,
	$$
	that is an absolute and normalized norm on $ \R ^3.$  Let $X$ be $\R^3,$ endowed with the previous norm and the usual order.
	
	We consider the elements $x=\frac{1}{4} (3,-3,0)$ and $x^*= \frac{1}{3} (2,-2,3).$  We know that
	$$
	x\in S_X, \sem x^+= \frac{3}{4 }e_1 \sem \text{and} \sem  x^-= \frac{3}{4 }e_2 
	$$
	so $\Vert x^+ \Vert = \Vert x^-\Vert= \dfrac{3}{4}.$
	Now we check that $x^* \in B_{ X^*}.$ Assume that $(r,s,t ) \in B_X$ and so
	$$
	 \frac{2}{3} \bigl( \vert r \vert + \vert s \vert \bigr) + \vert t \vert \le 1.
	$$
	Hence 
	\begin{align*}
	x^*(r,s,t)  & = \frac{2}{3} (r-s) + t \\
	\nonumber
	&  \le     \frac{2}{3} (\vert r \vert + \vert s \vert  ) + \vert t  \vert \\
	\nonumber
	&  \le 1 .
	\end{align*}
	Since $\Vert e_3 \Vert =1,$ $\Vert x^* \Vert  \ge x^*(e_3)=1.$ As a consequence $x^* \in S_{X^*}$ and we clearly have 
	$$
	x^* (x)=1, \sem x^{*+} = \frac{1}{3} (2,0,3) \sem \text{and}   \sem x^{*-} = \frac{1}{3} (0,2,0) .
	$$
	Since $\Vert  x^{*+} \Vert \le  \Vert x^{*} \Vert =1$ and $x^{*+} (e_3)=1$ we have that $\Vert  x^{*+} \Vert=1.$ It is also satisfied that $\Vert  x^{*-} \Vert = \frac{2}{3}.$
	
	As a consequence we obtain that
	$$
	\bigl \Vert  x^{*+} \bigr \Vert  \,  \bigl \Vert  x^+ \bigr \Vert + \bigl \Vert  x^{*-} \bigr \Vert  \,\bigl \Vert  x^- \bigr \Vert  =1 \frac{3}{4} + \frac{2}{3}\frac{3}{4} = \frac{5}{4} > 1,
	$$
	so $X$ does not satisfy the hereditary norm attaining property.
\end{proof}

\begin{example}
\label{exam-dim-3-N-SM}
	There is a three  dimensional Banach lattice $X$ which is not strongly  monotone.
\end{example}
\begin{proof}
	We consider  the space $X= \R^3$ endowed with the usual order and  the absolute  norm whose closed unit ball is the convex hull of  the  following set 
	$$
	\{ (x,y,0): x^2 + y^2 \le 1\} \cup
	\{ (x,0,z): x^2 + z^2 \le 1\}
	$$
	$$
	\cup
	\{ (0,y,z): y^2 + z^2 \le 1\} \cup
	\Bigl\{ \frac{1}{ \sqrt{2}} (r,s,t): r,s,t \in \{1,-1\} \Bigr\}.
	$$
	
	Firstly  we will check that the dual norm of an element $x^*\in X^*$  is given by
	\begin{equation}
	\label{3-4-dual-norm}
\Vert x^* \Vert = \max \Bigl \{  \Vert P_{ ij} (x^*) \Vert _2, i, j \in \{1,2,3\}, \frac{\Vert x^* \Vert _1}{\sqrt{2}  } \Bigr \}, 
		\end{equation}
	
	where we identified the dual of $X$ as the set $\R^3$ and    $P_{ij} $  is the projection on $\R ^3 $ given by
	$$
	P_{ij} (x^*) = x^*(i) e_i +  x^*(j) e_j, \seg (1 \le i,j \le 3 ).
	$$
	Since 
	$$
	\{ (x,y,0): x^2 + y^2 \le 1\} \cup
	\{ (x,0,z): x^2 + z^2 \le 1\} \cup
	\{ (0,y,z): y^2 + z^2 \le 1\} \subset B_X,
	$$
	then it is immediate to check that 
	$$
	\Vert x^* \Vert \ge \max \bigl\{ \Vert P_{ij} (x^*) \Vert _2 : i,j \in \{1,2,3\} \bigr\}.
	$$
	From the inclusion
	$$
	\Bigl\{ \frac{1}{ \sqrt{2}} (r,s,t): r,s,t \in \{1,-1\} \Bigr\} \subset B_X,
	$$
	it follows that 
	$$
	\Vert  x^* \Vert \ge  \frac{ \Vert x^* \Vert _1}{\sqrt{2}}  .
	$$
	
	As a consequence we obtain that 
	$$
	\Vert x^* \Vert \ge  \max \Bigl \{  \Vert P_{ ij} (x^*) \Vert _2, i, j \in \{1,2,3\},  \frac{\Vert x^* \Vert _1}{\sqrt{2}  }\Bigr \},
	$$
	and the reverse inequality  is trivially satisfied.
	
	It is clear that for every $ 0 < r < 1$ the element
	$z= \dfrac{1}{2} \Bigl( (r, \sqrt{1-r^2}, 0) + \frac{1}{\sqrt{2}}  (1,1,-1)\Bigr) $  belongs to  $B_X.$
	Let choose a sequence $\bigl( r_n\bigr) $ of real numbers such that $0 < r_n< \dfrac{1}{\sqrt{2}}$  and $ ( r_n\bigr)  \to \dfrac{1}{\sqrt{2}}.$ 
	 Put $y= \dfrac{1}{\sqrt{2}}(1,1,-1)$ and consider for $n \in \N$ the element $z_n$ given by
	$$
	z_n = \frac{1}{2} \bigl( \bigl (r_n, \sqrt{1-r_{n}^2}, 0\bigr) +y\bigr) 
	$$
	that belongs to $B_X$ and satisfies
	$$
	z_{n} ^{+} = \frac{1}{2} \Bigl( r_n+ \frac{1}{\sqrt{2}},  \sqrt{1-r_{n}^2} +  \frac{1}{\sqrt{2}}, 0 \Bigl) \sem \text{and} \sem   z_{n} ^{-} = \frac{1}{2}  \Bigl(0,0,\frac{1}{\sqrt{2}}\Bigl)= \frac{1}{2 \sqrt{2}} e_3 .
	$$
	Since $\bigl( r_n\bigr) $ converges to $\dfrac{1}{\sqrt{2}}$	 we obtain that
	$$
	\lim \bigl(   z_n ^+  \bigr)  = \frac{1}{\sqrt{2}}  (1,1,0) .
	$$ 
	Since the element 	 $\frac{1}{\sqrt{2}} (1,1,0) $ belongs to $B_{X}$, the element $x^* = \frac{1}{\sqrt{2}}  \bigl( e_{1}^* + e_{2} ^*\bigr) \in B_{X^*}$ and 
	$$
	\frac{1}{\sqrt{2}}  \bigl( e_{1}^* + e_{2} ^*\bigr) \Bigl(\frac{1}{\sqrt{2}} (1,1,0) \Bigr) =1,
	$$
	then $\frac{1}{\sqrt{2}} (1,1,0)  \in S_X.$
	
	{\bf Claim.} For every $\alpha \in \R^*$ it is satisfied that
	$$
	\Bigl \Vert \frac{z_{n } ^+}{\Vert z_{n } ^+\Vert } + \alpha e_3 \Bigr \Vert > 1, \seg \forall n \in \N.
	$$
	We prove the claim.  Let us fix $n \in \N.$  Firstly let us notice  that $\Vert z_ n ^+ \Vert =  \Vert z_ n ^+ \Vert_2.$  Since $z_n (1) $ and $z_n (2) $ are positive and $z_n (1) \neq z_n (2), $ we conclude that   $\Vert	z_{n} ^{+} \Vert _1  <  \sqrt{2}  \Vert 	z_{n} ^{+} \Vert _2  .$  So we can choose  a real number $u_n$ such that 
	$$
	0 < u_n <  \min \{ z_n (1), z_n (2), 	
	\sqrt{2} \Vert 	z_{n} ^{+} \Vert _2 - 
	\Vert 	z_{n} ^{+} \Vert _1	\}.
	$$

	If we write
	$x_{n}^* = \Bigl( \frac{ z_n (1)}{ \Vert 	z_{n} ^{+} \Vert _2}, \frac{ z_n (2)}{ \Vert	z_{n} ^{+} \Vert _2}, \frac{ u_n }{ \Vert 	z_{n} ^{+} \Vert _2}\Bigr) = \Bigl( \frac{ z_n ^+(1)}{ \Vert 	z_{n} ^{+} \Vert _2}, \frac{ z_n^+ (2)}{ \Vert	z_{n} ^{+} \Vert _2}, \frac{ u_n }{ \Vert 	z_{n} ^{+} \Vert _2}\Bigr),   $   in view of \eqref{3-4-dual-norm} we have that
	$$
	\Vert x_{n}^* \Vert = \max \Bigl \{
	\frac{	\Vert 	z_{n} ^{+} \Vert _2 }{ \Vert 	z_{n} ^{+} \Vert _2 },
	\frac{1 }{ \Vert z_n ^+ \Vert _2 } \Vert (z_n(1), u_n ) \Vert _2, 
	\frac{1 }{ \Vert z_n ^+\Vert _2 } \Vert (z_n(2), u_n ) \Vert _2, 
	\frac{1} {\sqrt{2} \ \Vert z_n^+ \Vert _2 } \Vert (z_n(1), z_n (2), u_n ) \Vert _1 
	\Bigr \}.
	$$
Since  $0 < u_n < \min \{ z_n (1), z_n (2)\}$  and $z_n \in B_X, $the first three numbers in the previous expression are  less or equal to $1.$  By the choice of $u_n$ we also have  that $ \Vert z_n^+ \Vert _1 +  u_n < \sqrt{2} \Vert z_n^+ \Vert _2,$ so the last number in the expression of  $\Vert x_n ^* \Vert $ is also less or equal to $1.$ As a consequence
$\Vert x_n^* \Vert \le 1 $ and 
		$$
	x_{n}^* \Bigl( \frac{z_{n}^+}{\Vert    z_{n}^+ \Vert_2 }
	+ \alpha e_3 \Bigr) = 
	x_{n}^{*}
	\Bigl( \frac{z_{n}^{+} }{ \Vert z_{n} ^{+} \Vert_2 }  \Bigr) + 
	\frac {u_n }{\Vert z_n ^+ \Vert _2} \alpha  = 1 + \frac {u_n }{\Vert z_n ^+ \Vert _2} \alpha  > 1
	$$
	in case that $\alpha > 0.$ So we checked that 
	$$
	\Bigl \Vert \frac{z_{n } ^+}{\Vert z_{n } ^+\Vert_2 } + \alpha e_3 \Bigr \Vert > 1
	$$
	for $\alpha \in \R^+.$  Since the norm of $X$ is absolute and $z_n^+(3)=0,$ the same condition is also satisfied for $\alpha  < 0.$ 

	Since $\lim \bigl(  \Vert z_{n}^+\Vert \bigr) =1$ and $\Vert z_n ^- \Vert  = \frac{1}{2\sqrt{2}} $  for any natural number $n,$  we conclude that the space $X$  is not strongly  monotone.
\end{proof}

 In view of Proposition \ref{pro-finite-BPBpp},  Examples  \ref{exam-dim-3-N-HNAp}  and  \ref{exam-dim-3-N-SM}   show that none of the sufficient conditions in  Theorem \ref{teo-SM-HNA-BPBpp}  are   necessary conditions.

For  the next example we need some auxiliary results.

\begin{lemma}
	\label{le-strict-mon-abs}
	Let  $\vert \ \vert $ be an absolute and normalized norm on $\R ^2.$  Then
	$$
	0 \le s < t, \sep 0 \le  u< v \ \Rightarrow \ \vert (s,u) \vert < \vert (t,v) \vert.
	$$
\end{lemma}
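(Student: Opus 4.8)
The plan is to argue by contradiction using only convexity of the norm together with absoluteness; in fact the "normalized" hypothesis will not be needed for this. Since $0\le s<t$ and $0\le u<v$ we have $t,v>0$, so $c:=\vert(t,v)\vert$ is strictly positive. By absoluteness the four points $(\pm t,\pm v)$ all have norm $c$, and their convex hull is exactly the closed rectangle $R:=[-t,t]\times[-v,v]$; hence, by convexity of $\vert\ \vert$, one has $\vert p\vert\le c$ for every $p\in R$. In particular $\vert(s,u)\vert\le c$, which already gives the weak inequality.

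Next I would observe that $(s,u)$ lies in the \emph{interior} of $R$: indeed $-t<s<t$ because $0\le s<t$ and $t>0$, and likewise $-v<u<v$. Suppose, towards a contradiction, that the conclusion fails, i.e. $\vert(s,u)\vert\ge\vert(t,v)\vert=c$; combined with $\vert(s,u)\vert\le c$ this forces $\vert(s,u)\vert=c$. Now I would invoke the elementary fact that a convex function on a convex set which attains the supremum of its values over that set at an interior point must be constant: given any $q\in R$, since $(s,u)$ is interior we may write $(s,u)=\lambda q+(1-\lambda)q'$ for suitable $q'\in R$ and $\lambda\in(0,1)$ (take $q'=(s,u)+\epsilon\big((s,u)-q\big)\in R$ for small $\epsilon>0$), whence $c=\vert(s,u)\vert\le\lambda\vert q\vert+(1-\lambda)\vert q'\vert\le\lambda\vert q\vert+(1-\lambda)c$, so $\vert q\vert\ge c$ and therefore $\vert q\vert=c$. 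Applying this with $q=(0,0)\in R$ yields $0=\vert(0,0)\vert=c>0$, a contradiction. Hence $\vert(s,u)\vert<\vert(t,v)\vert$.

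There is no serious obstacle here; the one point worth isolating is the observation that absoluteness is precisely what puts all four vertices of the \emph{symmetric} rectangle $R$ on the sphere of radius $c$, and it is this symmetry that upgrades the (easy, convexity-only) weak monotonicity to strictness once $(s,u)$ sits in the open rectangle, i.e. once each coordinate inequality is strict. Alternatively, one can run a slightly longer one-variable argument: first prove weak monotonicity of an absolute norm from the convex combination $a=\tfrac{a'+a}{2a'}\,a'+\tfrac{a'-a}{2a'}(-a')$ valid for $0\le a\le a'$; then from $\vert(s,u)\vert=\vert(t,v)\vert$ deduce via a sandwich that $\vert(s,v)\vert=\vert(s,u)\vert$; use that a nonnegative even convex function of one real variable that is constant on $[u,v]$ with $u\ge 0$ is constant on $[0,v]$ (its right derivative is nondecreasing and vanishes on $[u,v]$, hence on $[0,u]$) to get $\vert(s,0)\vert=c$; and finally invoke normalization, $\vert(s,0)\vert=s$, together with $\vert(t,v)\vert\ge\vert(t,0)\vert=t$, to reach the contradiction $t\le c=s<t$. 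I prefer the rectangle argument, which is shorter and does not use normalization.
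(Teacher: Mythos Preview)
Your proof is correct, but the paper takes a shorter route. The paper simply picks $\varepsilon>0$ small enough that $s<(1-\varepsilon)t$ and $u<(1-\varepsilon)v$ (possible because both coordinate inequalities are strict), and then invokes weak monotonicity of absolute norms together with homogeneity:
\[
\vert(s,u)\vert \;\le\; \bigl\vert\bigl((1-\varepsilon)t,(1-\varepsilon)v\bigr)\bigr\vert \;=\; (1-\varepsilon)\,\vert(t,v)\vert \;<\; \vert(t,v)\vert.
\]
Your rectangle/maximum-principle argument is perfectly sound and has the merit of being fully self-contained: you derive the weak inequality from convexity of the closed ball rather than quoting monotonicity of absolute norms, and the strict inequality then drops out of the interior-maximum observation. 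The cost is that it is longer than the paper's two-line scaling trick. As you noted, neither argument needs the ``normalized'' hypothesis; the paper's version does not use it either.
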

\begin{proof}
	Let us choose $0 < \eee < \min \bigl  \{1- \frac{s}{t}, 1- \frac{u}{v}\bigr\}$ and so  
	$$
	s  < (1- \eee ) t \sep \text{and} \sep u < (1- \eee) v.
	$$
	As a consequence
	$$
	\vert (s,u) \vert \le \vert  (1- \eee) t, (1-\eee)v) \vert = (1- \eee) \vert (t,v) \vert < \vert (t,v)\vert.
	$$	
\end{proof}

\begin{lemma}
	\label{le-strict-mon-2}
	Let  $\vert \ \vert $ be an absolute and normalized norm on $\R ^2.$  Assume that $c,d,e \in \R ^+_{0},$ $d \ne e$ and $\vert (c,d) \vert =1 = \vert (c,e) \vert$. Then $c=1.$ 
\end{lemma}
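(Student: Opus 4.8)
The plan is to reduce the whole statement to the behaviour of the one-variable function $g\colon [0,\infty)\to\mathbb{R}$ given by $g(t)=\vert (c,t)\vert$. After possibly interchanging $d$ and $e$ I may assume $0\le d<e$, so that $g(d)=g(e)=1$ by hypothesis. Two elementary properties of $g$ will do all the work. First, $g$ is convex, since it is the restriction of the norm $\vert\ \vert$ to the affine line $\{(c,t):t\in\mathbb{R}\}$. Second, $g$ is non-decreasing on $[0,\infty)$: an absolute norm is monotone in each coordinate (this is the same fact already used tacitly in the proof of Lemma \ref{le-strict-mon-abs}, where $s\le(1-\eee)t$ and $u\le(1-\eee)v$ are used to bound $\vert(s,u)\vert$), hence $0\le s\le t$ forces $\vert(c,s)\vert\le\vert(c,t)\vert$.

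Next I would show that these two properties force $g\equiv 1$ on the whole interval $[0,e]$. On $[d,e]$ this is immediate: $g$ is non-decreasing and $g(d)=g(e)=1$. For $t\in[0,d]$, write $d$ as the convex combination $d=\alpha t+(1-\alpha)e$ with $\alpha=\frac{e-d}{e-t}\in(0,1]$; convexity gives $1=g(d)\le\alpha g(t)+(1-\alpha)g(e)=\alpha g(t)+(1-\alpha)$, hence $g(t)\ge 1$, while monotonicity gives $g(t)\le g(d)=1$. Thus $g(t)=1$ for every $t\in[0,d]$ as well, so $g\equiv 1$ on $[0,e]$. (Equivalently: a convex function lies above the linear extension of any of its chords, and the chord of $g$ over $[d,e]$ is the constant $1$.)

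Finally I would evaluate at $t=0$. By positive homogeneity of the norm and the normalization $\vert(1,0)\vert=\vert e_1\vert=1$, together with $c\ge 0$, one has $1=g(0)=\vert(c,0)\vert=c\,\vert(1,0)\vert=c$, which is exactly $c=1$. I do not anticipate any real obstacle: the only points deserving a word of care are the monotonicity of an absolute norm (standard, and already in use in the paper) and the degenerate case $d=0$, where $1=g(d)=g(0)=c$ is immediate and nothing has to be said about the interval $[0,d]$.
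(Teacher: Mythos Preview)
Your argument is correct and is genuinely different from the paper's. The paper argues by contradiction: assuming $c<1$, it first observes that the vertical segment $\{(c,y):d\le y\le e\}$ lies in $S_X$, then locates a point $(x,d)$ with $c<x\le 1$ on the chord from $(c,e)$ to $(1,0)$ and shows $\lvert(x,d)\rvert=1$, and finally averages the three unit vectors $(c,d)$, $(x,d)$, $(c,e)$ to produce a point that must simultaneously lie on the sphere (by monotonicity) and strictly inside it (by the strict-monotonicity Lemma~\ref{le-strict-mon-abs}), a contradiction. Your route bypasses all of this geometry: by working with the single convex, non-decreasing function $g(t)=\lvert(c,t)\rvert$ you obtain $g\equiv 1$ on $[0,e]$ directly, and then read off $c=g(0)=1$ from normalization. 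The gain is that your proof is shorter, self-contained, and does not invoke Lemma~\ref{le-strict-mon-abs} at all; the paper's version, on the other hand, gives a more explicit picture of the unit ball near the flat segment, which is perhaps closer in spirit to the later constructions in Section~3.
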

\begin{proof}
	Since the norm $\vert \ \vert$  is absolute and normalized we have that
	$$
	c= \vert (c,0)\vert \le \vert (c,d)\vert  \le 1.
	$$
	Assume that $ d<e.$

	Assume that $c < 1.$   For any $d \le y \le   e$ we have that 
	$$
	1 = \vert (c,d)\vert \le \vert (c,y)\vert \le \vert (c,e)\vert=1,
	$$
	so
	$$
	\{(c,y): y \in [d,e] \} \subset S_{\R^2}.
	$$
	Since the unit ball is  convex, it contains the segment whose extreme  points are $(c,e)$ and  $(1,0).$ Since $0 < d < e$  there is a positive real number $x$ such that $(x,d)$ belongs to that segment and so $\vert (x,d) \vert \le 1.$ By using that the norm is absolute and $c < x$ we have that
	$$
	1 = \vert (c,d) \vert \le \vert (x,d)\vert   \le 1,
	$$
	so $ \vert (x,d) \vert  =1.$
	
	By using that $0 \le d < e$ and $c< x < 1$ the element
	$$
	z= \frac{1}{3} \bigl( (c,d) + (x,d) + (c,e) \bigr) 
	= \Bigl( \frac{ 2c+x}{3}, \frac{2d+e}{3}\Bigr),
	$$
	satisfies that
	$$
	1= \vert (c,d) \vert \le  \Bigl \vert   \Bigl( \frac{ 2c+x}{3}, \frac{2d+e}{3}\Bigr) \Bigr \vert \le 1,
	$$ 
	and so $u = \Bigl( \frac{ 2c+x}{3}, \frac{2d+e}{3}\Bigr)$ belongs to the unit  sphere.
	Since 
	$$
	c < \frac{2c+x}{3} \sem \text{and} \sem d < \frac{2d+e}{3},
	$$
	and $\vert (c,d) \vert =1$  the element $u$ does not belongs to the unit sphere  in view of Lemma \ref{le-strict-mon-abs}. Hence $c=1$ as we wanted to show.
\end{proof}

\begin{lemma}
	\label{le-strict-mon-3}
	Let  $\vert \ \vert $ be an absolute and normalized norm on $\R ^2.$  Assume that
	$$
	0 = \max\{ x \in \R :  \vert (x,1) \vert =1 \} = 
	\max\{ y \in \R :  \vert (1,y) \vert =1 \}.
	$$
	Then
	$$
	\vert s \vert < \vert t \vert \ \Rightarrow \ \vert (r,s) \vert < \vert (r,t) \vert  \sem \text{and} \sem 
	\vert (s,r) \vert < \vert (t,r) \vert.
	$$	  
\end{lemma}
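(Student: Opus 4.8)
The plan is to reduce to nonnegative coordinates and a single one-coordinate comparison, and then argue by contradiction, the whole geometric content being supplied by Lemma~\ref{le-strict-mon-2} together with the hypothesis about the two coordinate axes.

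First I would use that the norm is absolute to cut the statement down. Since $\vert (r,s)\vert = \vert(\vert r\vert,\vert s\vert)\vert$ and $\vert (r,t)\vert = \vert(\vert r\vert,\vert t\vert)\vert$ (and similarly for $\vert(s,r)\vert$ and $\vert(t,r)\vert$), it suffices to prove that $0\le s<t$ and $r\ge 0$ imply $\vert(r,s)\vert<\vert(r,t)\vert$; the companion inequality follows by applying the same statement to the norm $\vert(a,b)\vert':=\vert(b,a)\vert$, which is again absolute, normalized, and satisfies the same hypothesis. Next I would record the non-strict monotonicity $\vert(r,s)\vert\le\vert(r,t)\vert$: writing $\lambda:=\frac{s+t}{2t}\in[\tfrac12,1)$ one has $(r,s)=\lambda(r,t)+(1-\lambda)(r,-t)$, so by convexity of the norm and absoluteness ($\vert(r,-t)\vert=\vert(r,t)\vert$) the inequality follows at once.

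It then remains to exclude equality. Suppose $\vert(r,s)\vert=\vert(r,t)\vert=:\rho$; since $t>0$ we have $(r,t)\ne 0$, hence $\rho>0$. Put $c:=r/\rho$, $d:=s/\rho$, $e:=t/\rho$. Then $c,d,e\ge 0$, $d\ne e$ because $s<t$, and $\vert(c,d)\vert=\tfrac{1}{\rho}\vert(r,s)\vert=1=\tfrac{1}{\rho}\vert(r,t)\vert=\vert(c,e)\vert$. By Lemma~\ref{le-strict-mon-2} this forces $c=1$, and therefore $\vert(1,e)\vert=\vert(c,e)\vert=1$ with $e=t/\rho>0$, contradicting the hypothesis $\max\{y\in\R:\vert(1,y)\vert=1\}=0$. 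Hence $\vert(r,s)\vert<\vert(r,t)\vert$, which proves the lemma.

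I expect the only (minor) obstacle to be phrasing the reduction to $0\le s<t$, $r\ge 0$ cleanly and double-checking that the rescaled triple $(c,d,e)$ genuinely meets all the hypotheses of Lemma~\ref{le-strict-mon-2}; once that is in place there is nothing further to do, since Lemma~\ref{le-strict-mon-2} and the axis hypothesis carry all the weight. (One could also invoke the standard fact that an absolute norm is monotone in place of the short convexity computation used for the non-strict inequality.)
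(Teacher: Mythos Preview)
Your proof is correct and follows essentially the same route as the paper's: reduce to nonnegative coordinates, use monotonicity for the non-strict inequality, assume equality, normalize to land on the unit sphere, and then invoke Lemma~\ref{le-strict-mon-2} to force the first coordinate to equal $1$, contradicting the axis hypothesis. The only cosmetic differences are that the paper disposes of $r=0$ separately (whereas your argument absorbs it), and that you spell out the non-strict monotonicity via convexity while the paper simply cites absoluteness.
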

\begin{proof}
Since the norm is absolute, it suffices to prove the statement  for $0 \leq  s  <  t .$ Notice that if $r=0 ,$  the assertion stated  in the lemma is satisfied. Otherwise,   by using again that the norm is absolute, it is enough to prove the statement for $ r>0 .$

	Since the norm is absolute  we know that  $\vert (r,s) \vert \le \vert (r,t) \vert .$  In case that   $\vert (r,s) \vert = \vert (r,t) \vert $  it is also satisfied 
	$$
	\Bigl \vert  \Bigl( \frac{r}{ \vert (r,s) \vert}, \frac{s}{\vert (r,s) \vert} \Bigr) \Bigr\vert = 1 = 
	\Bigl \vert  \Bigl( \frac{r}{ \vert (r,s) \vert}, \frac{t}{\vert (r,s) \vert} \Bigr) \Bigr\vert.
	$$
	By Lemma \ref{le-strict-mon-2} we obtain that $r= \vert (r,s) \vert,$ that is,
	$$
	1= \Bigl \vert  \Bigl(1,  \frac{t}{ r} \Bigr) \Bigr\vert.
	$$
	By assumption we get $t=0,$ which is impossible, so we proved that $\vert (r,s) \vert < \vert (r,t) \vert .$  From the same argument we also obtain that  $ \vert (s,r) \vert < \vert (t,r) \vert .$
\end{proof}

\begin{example}
	\label{ell2-not-WM}
	   There is a Banach lattice isomorphic to $\ell_2$ that is not weakly  monotone. So this space  does not have the Bishop-Phelps-Bollob{\'a}s property for positive functionals.
\end{example}
\begin{proof}
	Let $\bigl(  \alpha _n\bigr) $ be  a sequence of positive real numbers that is strictly increasing  and converges to $1.$ For every natural number $n,$ consider the absolute norm  $ \vert \ \vert _n$ on $\R ^2$  whose closed unit ball is  the set $B_n$  given by
	$$
	B_n = \co \Bigl\{  \pm e_1, \pm e_2, \Bigl( \alpha _n, \pm \frac{1}{2}\Bigr),  \Bigl( -\alpha _n, \pm \frac{1}{2}\Bigr)\Bigr\}.
	$$
	The norm $\vert \ \vert _n$ clearly satisfies the assumption of Lemma \ref{le-strict-mon-3}.
	
	We write  $X= \ell_2 $ as a Riesz space. Since $\vert \ \vert _n$ is an  absolute and normalized norm for each natural number $n$ we have that
	$$
	\max\{ \vert x_{2n-1} \vert,  \vert x_{2n} \vert \} \le  \vert  ( x_{2n-1} ,   x_{2n}  )\vert _n \le \vert x_{2n-1} \vert+   \vert x_{2n} \vert, \seg \forall n \in \N.
	$$
	So we can define the norm on $X$ given by
	$$
	\Vert x \Vert = \Vert \{  \vert  ( x_{2n-1}  ,   x_{2n}  )\vert _n \} \Vert _2, \seg  (x \in X).
	$$

	Clearly the previous norm is equivalent to the usual norm on  $\ell _2$ and it is a lattice norm on $X.$
	
	For a nonempty subset $C \subset \N$ we denote by $P_C$ the operator  $P_C : \ell_2 \llll \ell_2 $ given by
		$$
		P_C(x)= \sum _{ n \in C} x(n) e_n \seg (x \in \ell_2).
		$$
		In case that $C=\varnothing$ we simply take $P_C=0.$

	Since we can apply Lemma \ref{le-strict-mon-3} to the norm  $\vert \ \vert_k$ for any natural number $k$ we have that
	$$
	\vert ( x_{2k-1}, 0 ) \vert _k < \vert ( x_{2k-1}, x_{2k} ) \vert _k, \sem \forall x  \in X, x_{2k} \ne 0.
	$$
	Of course, the analogous inequality holds   changing the role of the coordinates. 
	Since the norm of $\ell_2$ is strictly monotone, for any natural number $n$  we conclude that
	$$
	\Vert P_{\N \backslash \{n\} } (x)           \Vert < \Vert x \Vert, \seg \forall x \in X, x_n \ne 0.
	$$
	Since $X$ is a Banach  lattice, as a consequence of the previous inequality, we  obtain that
	\begin{equation}
	\label{mon-X}
	x \in X, A \subset \N, P_A(x) \ne 0 \ \Rightarrow \ 
	\Vert (I-P_A)(x)) \Vert =  \Vert P_{\N \backslash A} (x) \Vert  < \Vert x \Vert .
	\end{equation}
	Notice that for $x \in X,$ if we denote  by $A$ and $N$ the sets
	$$
	A = \{ n \in \N: x_n  > 0\} \seg \text{and} \seg 
	B = \{ n \in \N: x_n  <  0\},
	$$
	 then we have that
	$$
	x^+ =P_A(x) \seg \text{and} \seg   x^- =-P_B(x).
	$$
	By taking into account  \eqref{mon-X} we deduce  that  
	\begin{equation}
	\label{x-x+-pos}
	x \in X, \ \Vert x \Vert = \Vert x^+ \Vert \ \Rightarrow \ x \ge 0.
	\end{equation}

	Finally, we show that $X$ is not weakly monotone. In order to do this, we define the sequence $\bigl( z_n\bigr) $ given by
	$$
	z_n(2n-1)= \alpha _n, \sep    z_n(2n)= - \frac{1}{2} \sep \text{and} \sep z_n(k)=0 \ \text{if } \ k \in \N \backslash \{2n-1, 2n\}.
	$$
	For each natural number $n,$  we  clearly have that $z_n \in X$  and  $\Vert z_n \Vert = \bigl\vert  \bigl (\alpha _n, -\frac{1}{2} \bigr) \bigr\vert_n =1.$ We also have that
	$
	z_{n}^+ = \alpha _n e_{2n-1}  $ and $z_{n}^- = \frac{1}{2} e_{2n} ,$ so
	$$
	\Vert z_{n}^+ \Vert  = \alpha _n \sem \text{and} \sem 
	\Vert z_{n}^- \Vert = \frac{1}{2}.
	$$
	If $x \in X$ and   $\Vert x \Vert = \Vert x^+ \Vert =1, $ by \eqref{x-x+-pos} we obtain that $x= x^+,$ that is, $x^-=0.$ Hence
	$$
	\frac{1}{2}  = \Vert z_{n} ^- \Vert  = \Vert x^-  - z_{n} ^-  \Vert \le \Vert x -  z_n \Vert, \sem \forall n \in  \N.
	$$
	Since $\lim \bigl(  \Vert z_{n} ^+ \Vert \bigr)  = \lim \bigl(  \alpha _n \bigl) =1,$ the space $X$ is not  weakly  monotone.  
	
	In view of Theorem \ref{teo-SM-HNA-BPBpp}, the space $X$ cannot  have the Bishop-Phelps-Bollob{\'a}s property for positive functionals.
\end{proof}

\bibliographystyle{amsalpha}

\end{document}